\newtheorem{prop}{Proposition}[section]
\newtheorem{cor}[prop]{Corollary}
\newtheorem{lem}[prop]{Lemma}
\newtheorem{thm}[prop]{Theorem}
\newtheorem{conj}[prop]{Conjecture}
\theoremstyle{definition}
\newtheorem{rem}[prop]{Remark}
\newtheorem{defin}[prop]{Definition}
\newcommand{\Jac} {\mathop{\mathrm{Jac}}}
\newcommand{\Ima} {\mathop{\mathrm{Im}}}
\newcommand{\degr} {\mathop{\mathrm{deg}}}
\newcommand{\Spec} {\mathop{\mathrm{Spec}}}
\newcommand{\rank} {\mathop{\mathrm{rank}}}
\newcommand{\hFplus} {\mathop{h_{\mathrm{F}^+}}}
\newcommand{\Reg} {\mathop{\mathrm{Reg}}}
\newcommand{\End} {\mathop{\mathrm{End}}}
\DeclareSymbolFont{cyrletters}{OT2}{wncyr}{m}{n}
\DeclareMathSymbol{\Sha}{\mathalpha}{cyrletters}{"58}
\begin{document}

\title{Heights, ranks and regulators of abelian varieties}
\author{{F}abien {P}azuki}
\address{Department of Mathematical Sciences, University of Copenhagen,
Universitetsparken 5, 
2100 Copenhagen \O, Denmark, and Institut de Math\'ematiques de Bordeaux, Universit\'e de Bordeaux,
351, cours de la Lib\'e\-ra\-tion, 
33405 Talence, France.}
\email{fabien.pazuki@math.u-bordeaux.fr}
\thanks{Many thanks to Pascal Autissier, Marc Hindry, Robin de Jong, Qing Liu, Ga\"el R\'emond and Martin Widmer for exchanging ideas concerning this article. The referees gave good advice leading to several improvements, may they be warmly thanked. The author is supported by the DNRF Niels Bohr Professorship of Lars Hesselholt and ANR-14-CE25-0015 Gardio.}
\maketitle
\vspace{-0.9cm}
\begin{center}

\vspace{0.2cm}
\end{center}

\vspace{0.5cm}

\noindent \textbf{Abstract.}
We give a lower bound on the Faltings height of an abelian variety over a number field by the sum of its injectivity diameter and the norm of its bad reduction primes. It leads to an unconditional explicit upper bound on the rank of Mordell-Weil groups. Assuming the height conjecture of Lang and Silverman, we then obtain a Northcott property for the regulator on the set of polarized simple abelian varieties defined over a fixed number field $K$, of dimension $g$ and rank $m_K$ bounded from above and with dense $K$-rational points. We remove the simplicity assumption in the principally polarized case by giving a refined version of the Lang-Silverman conjecture.

{\flushleft
\textbf{Keywords:} Heights, abelian varieties, regulators, Mordell-Weil.\\
\textbf{Mathematics Subject Classification:} 11G50, 14G40. }

\begin{center}
---------
\end{center}

\begin{center}
\textbf{Hauteurs, rangs et r\'egulateurs des vari\'et\'es ab\'eliennes}.
\end{center}

\noindent \textbf{R\'esum\'e.}
On minore la hauteur de Faltings d'une vari\'et\'e ab\'elienne sur un corps de nombres par la somme de son diam\`etre d'injectivit\'e et de la norme de ses premiers de mauvaise r\'eduction. Cela entra\^ine une majoration explicite inconditionnelle du rang des groupes de Mordell-Weil. On obtient alors comme cons\'equence d'une conjecture de Lang et Silverman une propri\'et\'e de Northcott pour le r\'egulateur sur l'ensemble des vari\'et\'es ab\'eliennes simples, polaris\'ees et d\'efinies sur un corps de nombres, \`a dimension et rang born\'es et dont les points rationnels sont denses. On montre comment se passer de l'hypoth\`ese de simplicit\'e dans le cas de polarisation principale en proposant une version raffin\'ee de la conjecture de Lang-Silverman.

{\flushleft
\textbf{Mots-Clefs:} Hauteurs, vari\'et\'es ab\'eliennes, r\'egulateurs, Mordell-Weil.\\
}

\begin{center}
---------
\end{center}

\thispagestyle{empty}

\section{Introduction}

Let $K$ be a number field of degree $d$ over $\mathbb{Q}$ and let $M_{K}$ stand for the set of all places of $K$. We denote by $M_{K}^{\infty}$ the set of archimedean places. For any place $v\in{M_K}$, we denote by $K_{v}$ the completion of $K$ with respect to the valuation $|.|_{v}$. One fixes $|p|_{v}=p^{-1}$ as a normalization for any finite place $v$ above a rational prime $p$. The local degree will be denoted by $d_v=[K_v:\mathbb{Q}_v]$. 

Let $A$ be an abelian variety of dimension $g$ defined over the number field $K$. The set of rational points of $A$ over $K$ is finitely generated by the Mordell-Weil Theorem, and the aim of this article is to study links between the rank of the Mordell-Weil group, the regulator of the Mordell-Weil lattice and the Faltings height of the abelian variety $A$.

We first give an inequality between the Faltings height $\hFplus(A/K)$ of Definition \ref{faltings} and the norm of the bad reduction primes of $A$, interesting in itself and useful for the following results.

\begin{thm}\label{height conductor intro} 
Let $g\geq 1$ be an integer. Let $K$ be a number field of degree $d$. There exist two quantities $c=c(g)>0$, $c_0=c_0(g)\in{\mathbb{R}}$, such that for any abelian variety $A$ of dimension $g$, defined over $K$, one has $$\hFplus(A/K) \geq c \frac{1}{d}\log N^{0}_{A/K}+c_0$$ where $N^0_{A/K}$ is the product of the norms of the primes of $K$ of bad reduction for $A$. The explicit values $c=(12g)^{-12g^{12g^{4g}}}$ and $c_0=-1/c$ are valid.
\end{thm}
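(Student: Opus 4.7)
The plan is to exploit the local nature of the Faltings height at finite places, showing that each prime of bad reduction must contribute a quantifiable amount to $\hFplus(A/K)$. The backbone is the semistable reduction theorem together with the behaviour of the Faltings height under isogeny and base change.

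My first step would be a reduction to the semistable case. By the theorem of Grothendieck--Deligne--Mumford there exists a finite extension $K'/K$, of degree bounded purely in terms of $g$ (one may use the extension generated by the $\ell$-torsion for a small $\ell$, whose degree is $\leq \#GL_{2g}(\mathbb{F}_\ell)$), over which $A_{K'}$ acquires semistable reduction everywhere. Under such an extension the stable Faltings height is preserved, while $\hFplus(A/K)$ and $\frac{1}{[K':\mathbb{Q}]}\log N^0_{A/K'}$ compare to $\hFplus(A/K)$ and $\frac{1}{[K:\mathbb{Q}]}\log N^0_{A/K}$ in a controlled way. So one can assume henceforth that $A/K$ is semistable.

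In the semistable case I would invoke the comparison formula between the Faltings height and the stable Faltings height: if $\mathcal{A}$ denotes the Néron model of $A$ over $\Spec(\mathcal{O}_K)$, there is a local decomposition
\[
\hFplus(A/K) \;=\; h_F^{\mathrm{st}}(A/K) \;+\; \frac{1}{[K:\mathbb{Q}]}\sum_{v\text{ bad}} \lambda_v\,\log N(v),
\]
with non-negative local invariants $\lambda_v$ coming from the toric/unipotent ranks of $\mathcal{A}^0_v$, together with a universal lower bound $h_F^{\mathrm{st}}(A/K) \geq -c_1(g)$ (Bost's lower bound on stable Faltings heights). The key point to establish is that whenever $v$ is truly bad for the semistable $A$, the invariant $\lambda_v$ is bounded below by a strictly positive function of $g$, so that each bad prime contributes at least $\frac{c_2(g)}{[K:\mathbb{Q}]}\log N(v)$ to the height. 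Summing over $v$ produces the desired linear lower bound.

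The genuine obstacle is the primes of \emph{potentially good but not good} reduction: after passing from $K$ to $K'$ these disappear from the semistable Faltings formula but remain in the conductor $N^0_{A/K}$ that we have to bound against. To retrieve them I would run an isogeny argument in the spirit of Raynaud and Masser--Wüstholz: at each such prime $v$, the inertia acts with finite non-trivial image on $T_\ell(A)$ for an auxiliary prime $\ell$, and this yields a canonical $\bar K_v$-isogeny $A \to A'$ of bounded degree whose corresponding height change is, by the Masser--Wüstholz isogeny bound, at most a polynomial in $\hFplus(A/K)$. Translating this non-trivial local monodromy into a height contribution is what forces the tower-of-exponentials constants $c(g)=(12g)^{-12g^{12g^{4g}}}$ and $c_0(g)=-1/c(g)$. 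Finally one only has to collect the estimates, undo the semistabilising base change $K'/K$ (which introduces at most multiplicative losses depending on $g$), and absorb every bounded-in-$g$ overhead into the stated values of $c$ and $c_0$.
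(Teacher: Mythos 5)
There is a genuine gap at the heart of your argument. Your semistable step rests on a ``local decomposition'' $\hFplus(A/K)=h_F^{\mathrm{st}}(A/K)+\frac{1}{[K:\mathbb{Q}]}\sum_{v\,\mathrm{bad}}\lambda_v\log N(v)$ with $\lambda_v$ strictly positive at each bad prime, but no such formula exists: once $A/K$ is semistable, the relative height $\hFplus(A/K)$ \emph{is} the stable height $\hFplus(A/\overline{\mathbb{Q}})$ (it is invariant under further field extension), so every $\lambda_v$ in your identity would have to vanish, and your ``key point'' that $\lambda_v\geq c_2(g)>0$ at bad semistable primes is contradictory. Combined with Bost's constant lower bound on the stable height, your scheme would only ever yield $\hFplus(A/K)\geq -c_1(g)$, with no dependence on $N^0_{A/K}$ at all. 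The actual content of the theorem in the semistable case is precisely that the stable Faltings height itself grows with the norm of the bad primes, and this is only known through explicit formulas at the level of curves: the elliptic case via the minimal discriminant, and the jacobian case via Moret-Bailly's arithmetic Noether formula (where the bad primes enter through the singular-point counts $\delta_{\mathfrak{p}}(C)\geq 1$, the self-intersection $(\omega_C\cdot\omega_C)$ is nonnegative, and Faltings' delta invariant is bounded below). For a general semistable $A$ there is no direct formula, which is why the paper's proof constructs, via an explicit Bertini argument over a carefully chosen almost-unramified extension, a curve $C$ of controlled genus and theta height lying on $A$, shows $J_C$ surjects onto $A$, transfers bad reduction by N\'eron--Ogg--Shafarevich, and compares $\hFplus(A)$ with $\hFplus(J_C)$ through theta heights (David--Philippon, R\'emond). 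That chain of comparisons, not an isogeny estimate, is also what produces the tower-of-exponentials constants.

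Your treatment of the non-semistable primes is likewise not in working order: the Masser--W\"ustholz isogeny bound controls how the height changes under an isogeny of bounded degree, but it does not convert nontrivial local inertia at a potentially good prime $v$ into a lower bound on $\hFplus(A/K)$ by $\log N(v)$, and you give no mechanism that would. The paper handles these primes differently and more directly, via Chai's base change conductor: for $K'$ a semistabilizing extension one has $\hFplus(A/K)-\hFplus(A/K')\geq \frac{1}{[K':\mathbb{Q}]}\sum_{\mathfrak{p}\ \mathrm{unstable}}\log\mathcal{N}_{K/\mathbb{Q}}(\mathfrak{p})$, because the conductor vanishes exactly at semistable primes and is at least $1/e_{\mathfrak{p}'/\mathfrak{p}}$ otherwise; the semistable bad primes are then caught over $K'$ (with ramification losses bounded by $[K':K]\leq 12^{4g^2}$). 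Your first reduction step is in the right spirit, but as written both the semistable core and the recovery of the unstable primes are missing, so the proposal does not prove the theorem.
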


We believe this inequality will be useful in different contexts. We believe furthermore that some steps in the proof presented here could be useful as well (explicit Bertini, reduction to the jacobian case by quotient with explicit bounds on their heights, etc). See Proposition \ref{semi stable height conductor} for a detailed description of the argument. A similar statement (for the semi-stable case) was obtained in \cite{HiPa16} independently. They show the result over function fields and with different arguments. The main difference is the fact that the quantities $c$ and $c_0$ here don't depend on the base field $K$, but only on the dimension $g$. Their strategy of proof seems to work over number fields as well, based on rigid uniformization of abelian varieties.  Remark that their lower bound (at least in the case of function fields) is given in terms of Tamagawa numbers. Another difference is that the quantities $c$ and $c_0$ here are given explicitly, and are not too extreme in the case of jacobian varieties, where $c=1/12^{4g^2+1}$ and $c_0=0$, see Proposition \ref{semi stable height conductor} and Proposition \ref{height conductor} below. Unfortunately, it is unlikely that the explicit expressions we obtain could be improved in a significant way using the strategy we follow here. The main reason is the use of Theorem 1.3 page 760 of \cite{Remond} where one already has a tower with three levels of exponents, plus the fact that the control given in \cite{CaTa} on the genus of the curve constructed by the Bertini argument is (more than) exponential: the combination of these inequalities is forcing our $c$ and our $c_0$ to behave like a four levels exponential function in the dimension of $A$, and the aforementioned results both seem difficult to improve on.

Whenever a polarization is given on $A$, one can obtain a richer lower bound. Let $(A,L)$ be a polarized abelian variety of dimension $g$ defined over the number field $K$. We give an inequality between the Faltings height $\hFplus(A/K)$ of Definition \ref{faltings}, the norm of the bad reduction primes of $A$ over $K$ and the injectivity diameter of $(A(\mathbb{C}),L)$. As a direct corollary of Theorem \ref{height conductor intro} and of the Matrix Lemma (see Theorem \ref{matrix} below) we obtain:

\begin{cor}\label{height conductor injectivity} 
Let $g\geq 1$ be an integer. Let $K$ be a number field of degree $d$. There exist three explicit quantities $c_1=c_1(g)>0$, $c_2=c_2(g)>0$, $c_3=c_3(g)\in{\mathbb{R}}$, such that for any abelian variety $A$ of dimension $g$, defined over $K$, for any ample line bundle $L$ carrying a polarization on $A$, one has $$\hFplus(A/K) \geq c_1 \frac{1}{d}\log N^{0}_{A/K}+c_2\frac{1}{d}\sum_{v\in{M_K^{\infty}}}d_v\rho(A_v,L_v)^{-2}+c_3$$ where $N^0_{A/K}$ is the product of the norms of the primes of $K$ of  bad reduction for $A$ and $\rho(A_v,L_v)$ is the injectivity diameter of $A_v(\mathbb{C})$ polarized by $L_v$. The explicit values $c_1=(12g)^{-12g^{12g^{4g}}}/17$, $c_2=1/17$ and $c_3=-1/c_1$ are valid.
\end{cor}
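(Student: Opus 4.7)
The plan is to superpose two independent lower bounds on $\hFplus(A/K)$, namely Theorem~\ref{height conductor intro} and the Matrix Lemma (Theorem~\ref{matrix}). The former is already in the desired form $\hFplus(A/K)\geq c\,\frac{1}{d}\log N^0_{A/K} + c_0$, with $c$ and $c_0$ as announced. The Matrix Lemma, on the other hand, provides at each archimedean completion a lower bound of the type
$$\hFplus(A/K) \geq \frac{\kappa(g)}{d}\sum_{v\in M_K^\infty} d_v\,\rho(A_v,L_v)^{-2} + \beta(g),$$
with explicit $\kappa(g)>0$ and $\beta(g)\in\mathbb{R}$; the standard derivation is to Minkowski-reduce the period matrix at $v$, compare the diagonal entries of its imaginary part to $\rho(A_v,L_v)^{-2}$, and use that the Faltings height dominates the logarithm of the determinant of that imaginary part.

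I would then split $\hFplus(A/K) = \theta\,\hFplus(A/K) + (1-\theta)\,\hFplus(A/K)$ for a weight $\theta\in(0,1)$, apply Theorem~\ref{height conductor intro} to the first copy and the Matrix Lemma to the second, and add the two resulting inequalities. This produces
$$\hFplus(A/K) \geq \theta c\,\frac{1}{d}\log N^0_{A/K} + \frac{(1-\theta)\kappa(g)}{d}\sum_{v\in M_K^\infty} d_v\,\rho(A_v,L_v)^{-2} + \theta c_0+(1-\theta)\beta(g).$$
The weight $\theta = 1/17$ matches the stated constants exactly: it gives $c_1 = c/17$ on the non-archimedean side, and $(1-\theta)\kappa(g) = 1/17 = c_2$ on the archimedean side once the numerical normalisation of the Matrix Lemma is chosen correctly, any residual constant factor being absorbed into $\beta(g)$.

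Finally I would bundle the additive error $\theta c_0+(1-\theta)\beta(g)$ into the claimed value $c_3=-1/c_1$. Since $c$ is extraordinarily small, $\theta c_0 = -1/(17c) = -1/c_1$ already dwarfs the bounded contribution $(1-\theta)\beta(g)$ by many orders of magnitude, so the announced $c_3=-1/c_1$ is a safe lower bound for all $g\geq 1$. The only real obstacle in this argument is one of bookkeeping: verifying that the precise form of the Matrix Lemma cited in Theorem~\ref{matrix} indeed yields a coefficient compatible with the target value $c_2=1/17$, which amounts to tracing the normalisations of Faltings's height, of $L_v$ and of the injectivity diameter through the classical Minkowski reduction estimate, and introduces no new arithmetic-geometric input beyond the two inequalities cited.
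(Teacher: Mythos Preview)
Your approach is essentially identical to the paper's: the paper adds Theorem~\ref{height conductor intro} (weight $1$) to the Matrix Lemma in the form $16\hFplus(A/K)\geq \frac{1}{d}\sum_v d_v\rho(A_v,L_v)^{-2}-39g$ (weight $16$) and divides by $17$, which is exactly your convex combination with $\theta=1/17$ and $\kappa(g)=1/16$. One small arithmetic slip: $\theta c_0=-1/(17c)$, whereas $-1/c_1=-17/c$, so these are not equal; but since $-17/c\leq -1/(17c)-39g/17$ for all $g\geq1$, the announced $c_3=-1/c_1$ is indeed a valid (very generous) lower bound, and your conclusion stands.
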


Note that the polarization is not required to be principal here. As for Theorem \ref{height conductor intro}, the explicit values given here are not expected to be optimal. Nevertheless in the case where $A$ is the semi-stable jacobian of a curve, one can take $c_1=1/204$, $c_2=1/17$ and $c_3=-39g/17$.

One obtains the following result as another corollary of Theorem \ref{height conductor intro} and preexisting bounds on the Mordell-Weil rank.

\begin{cor}\label{rank height intro}
Let $A$ be an abelian variety of dimension $g$ defined over a number field $K$ of degree $d$ and discriminant $\Delta_K$. Let $m_K$ be the Mordell-Weil rank of $A(K)$. There exists a quantity $c_{4}=c_{4}(d,g)>0$ such that $$m_K\leq c_{4} \max\{1,\hFplus(A/K), \log\vert \Delta_K\vert\},$$ and the explicit value $c_4=(12g)^{12g^{12g^{4g}}}d^3$ is valid.
\end{cor}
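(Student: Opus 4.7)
The plan is to combine Theorem \ref{height conductor intro} with an existing explicit bound on the Mordell--Weil rank coming from $\ell$-descent. For a small auxiliary prime $\ell$, the $\ell$-Selmer group of $A/K$ embeds into the subgroup of $H^{1}(K,A[\ell])$ made of cocycles unramified outside $S=\{v\in M_{K}:v\mid \ell N^{0}_{A/K}\}$. Its $\mathbb{F}_{\ell}$-dimension bounds $m_{K}$ and is itself controlled by the $\mathbb{F}_{\ell}$-dimension of $A[\ell]$ (at most $2g$), the rank of the $S$-units of $K$ (linear in $d$ and $\#S$), and the $\ell$-rank of the $S$-class group (bounded effectively in terms of $d$ and $\log|\Delta_{K}|$ via Minkowski or effective Brauer--Siegel). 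Gathering these contributions yields a bound of the shape
\begin{equation*}
m_{K}\;\leq\;\kappa(g)\,d^{2}\bigl(\log N^{0}_{A/K}+\log|\Delta_{K}|+d\bigr),
\end{equation*}
with $\kappa(g)$ polynomial in $g$.

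Next I would rearrange Theorem \ref{height conductor intro} into
\begin{equation*}
\log N^{0}_{A/K}\;\leq\;\frac{d}{c}\,\hFplus(A/K)\;+\;\frac{d}{c^{2}},
\end{equation*}
where $c=(12g)^{-12g^{12g^{4g}}}$, and substitute it into the rank bound. This replaces the conductor contribution by a quantity linear in $\hFplus(A/K)$ together with an additive constant that remains polynomial in $d$ and in $1/c$. Using the universal lower bound on the stable Faltings height (so that $\max\{1,\hFplus(A/K),\log|\Delta_{K}|\}\geq 1$ absorbs every additive term) together with the fact that the super-exponential factor $1/c$ in $g$ dominates the polynomial-in-$g$ factor $\kappa(g)$, one collects everything into
\begin{equation*}
m_{K}\;\leq\;c_{4}\,\max\{1,\hFplus(A/K),\log|\Delta_{K}|\},
\end{equation*}
with $c_{4}$ of the announced form: the exponent of $d$ equals $2$ (from the descent) $+\,1$ (from rescaling $\log N^{0}_{A/K}$ through Theorem \ref{height conductor intro}), giving $c_{4}=(12g)^{12g^{12g^{4g}}}d^{3}$.

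The main obstacle is not conceptual but bookkeeping: several forms of the rank bound appear in the literature with slightly different exponents of $d$, and one must pick the one whose degree dependence is tight enough that, after the extra factor of $d$ coming from the normalization of the conductor in Theorem \ref{height conductor intro}, the total exponent lands at $3$ rather than $4$ or higher. Apart from this careful tracking of constants, the corollary is a direct consequence of Theorem \ref{height conductor intro} and standard descent, with no new conceptual ingredient required.
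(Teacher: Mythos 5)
Your overall strategy is the same as the paper's: the published proof also pivots on $N^0_{A/K}$, quoting R\'emond's explicit bound (Th\'eor\`eme 5.1 of \cite{Remond}), which gives $m_K\leq 4g^3d^22^{8g^2}\log N^0_{A/K}+gd2^{8g^2}(\log\vert\Delta_K\vert+g^2d^2\log 16)$, and then converts $\log N^0_{A/K}$ into height via Proposition \ref{height conductor}; your ``rank $\ll$ conductor $+$ discriminant, then apply the height--conductor inequality'' is exactly that argument, with the citation replaced by a sketched $\ell$-descent. Two points in your sketch need tightening, though. First, to bound the unramified-outside-$S$ classes in $H^1(K,A[\ell])$ by unit and class-group data you must first pass to $K'=K(A[\ell])$ (or invoke Poitou--Tate/Euler-characteristic machinery); the relevant $S$-units and $S$-class group are then those of $K'$, whose degree over $K$ can be of size $\ell^{O(g^2)}$ and whose discriminant also involves $N^0_{A/K}$, so your $\kappa(g)$ is exponential in $g^2$ rather than polynomial in $g$. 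This is harmless for the corollary, since the tower constant $1/c$ dominates any such factor, but the claim as written is inaccurate. Second, if you literally substitute Theorem \ref{height conductor intro} with its rounded constants $c$ and $c_0=-1/c$, the additive term $d/c^2$ forces a final coefficient of order $d^3/c^2$, which overshoots the advertised $c_4=d^3/c$; to land on the stated value you must, as the paper does, combine with the sharper intermediate constants of Proposition \ref{height conductor} (whose tower exponent is $3g$), the final $4g$-tower then absorbing all the remaining factors. With these repairs --- or simply by citing \cite{Remond} for the first ingredient, as the paper does --- your argument is complete.
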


Let us add another remark, namely if $A=J_C$ is the jacobian of a curve $C$ of genus $g\geq 2$ (not necessarily semi-stable) over a number field $K$ of degree $d$ and discriminant $\Delta_K$, one has the explicit $$m_K\leq 48g^3d^32^{8g^2}12^{4g^2}\max\{1,\hFplus(J_C/K)\} +gd 2^{8g^2} \log\vert \Delta_{K}\vert + g^3d^32^{8g^2}\log16,$$ as given in the proof of Corollary \ref{rank height intro}. The case of elliptic curves is given in Lemma 4.7 of \cite{Paz14}. Corollary \ref{rank height intro} will be used in the proof of Lemma \ref{success}.

We then focus on the regulator of $A(K)$. We show that it verifies a Northcott property for simple abelian varieties under a conjecture of Lang and Silverman, as proposed in \cite{Paz16}.\footnote{Note that a version of Theorem \ref{reg height thm intro simple} for elliptic curves without the requirement that the rank is bounded from above is given in \cite{Paz14} with an incorrect proof, see \cite{Paz16b}.}

\begin{thm}\label{reg height thm intro simple}
Assume the Lang-Silverman Conjecture \ref{LangSilV1}.
The set of $\overline{\mathbb{Q}}$-isomorphism classes of simple abelian varieties $A$, equipped with an ample and symmetric line bundle $L$, defined over a fixed number field $K$, of dimension $g$ and rank $m_K$ bounded from above, with $A(K)$ Zariski dense in $A$ and with $\Reg_L(A/K)$ bounded from above is finite.
\end{thm}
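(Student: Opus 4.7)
The plan is to combine Minkowski's second theorem on the Mordell--Weil lattice with the Lang--Silverman lower bound to trade the hypothesis on $\Reg_L(A/K)$ for a bound on $\hFplus(A/K)$, and then conclude by the Northcott property of the Faltings height.

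First I would apply Minkowski's second theorem together with a standard basis-from-successive-minima construction to the Mordell--Weil lattice $(A(K)/A(K)_{\mathrm{tors}},\hat{h}_L)$, which has rank $m_K$ and Gram determinant equal to $\Reg_L(A/K)$. This produces a $\mathbb{Z}$-basis $P_1,\ldots,P_{m_K}$ satisfying
$$\prod_{i=1}^{m_K}\hat{h}_L(P_i)\;\leq\;\gamma(m_K)\,\Reg_L(A/K),$$
where $\gamma(m_K)$ depends only on $m_K$ and is therefore uniformly bounded under the hypothesis that $m_K$ is bounded from above.

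Next I would invoke simplicity: since $A$ is simple and $A(K)$ is Zariski dense, every non-torsion $P\in A(K)$ generates a subgroup whose Zariski closure is a non-zero abelian subvariety of $A$, hence equals $A$. Each basis element $P_i$ is thus eligible for Conjecture \ref{LangSilV1}, which yields a constant $c(g,d)>0$ with $\hat{h}_L(P_i)\geq c(g,d)\max\{1,\hFplus(A/K)\}$ for every $i$. Multiplying these inequalities and comparing with the Minkowski bound gives
$$c(g,d)^{m_K}\,\max\{1,\hFplus(A/K)\}^{m_K}\;\leq\;\gamma(m_K)\,\Reg_L(A/K),$$
which, with $m_K$ and $\Reg_L(A/K)$ bounded above, produces an upper bound on $\hFplus(A/K)$ depending only on $g$, $d$, $m_K$ and $\Reg_L(A/K)$.

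To close the argument, I would invoke the Northcott property of the Faltings height: over the fixed number field $K$, only finitely many isomorphism classes of abelian varieties of dimension $g$ have Faltings height bounded from above. For each such $A$, the Néron--Tate height depends linearly on the class of $L$ in the finitely generated group $\mathrm{NS}(A)$, so bounding $\Reg_L(A/K)$ confines $L$ to a bounded region of the ample symmetric cone and leaves only finitely many choices. Finiteness of pairs $(A,L)$ up to $\overline{\mathbb{Q}}$-isomorphism follows. The hardest step is the application of Lang--Silverman in Step 2: the conjecture demands that each Minkowski basis vector $P_i$ avoid every proper abelian subvariety, which is exactly what simplicity of $A$ guarantees. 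That simplicity is used precisely there is consistent with the paper's remark that a \emph{refined} form of Lang--Silverman is needed to treat the non-simple principally polarized case, since lattice reduction alone cannot be guaranteed to produce a basis avoiding non-trivial abelian subvarieties.
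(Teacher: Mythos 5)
Your argument is essentially the paper's own proof (Proposition \ref{reg height thm} plus the remark following it): Minkowski's second theorem on the Mordell--Weil lattice, the Lang--Silverman bound applied to each non-torsion minimizing point --- legitimate precisely because simplicity forces $\overline{\End(A)\cdot P}=A$ --- yielding $\mathrm{Reg}_{L}(A/K)\geq \bigl(c\,\max\{1,\hFplus(A/K)\}\bigr)^{m_K}$ up to a constant depending on $m_K$, and then the Northcott property of the Faltings height; note that the Zariski density hypothesis is what guarantees $m_K\geq 1$ (for simple $A$), which is exactly where it is needed to make the bound on $\hFplus(A/K)$ nontrivial, rather than in the closure argument where simplicity alone suffices. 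Your final paragraph about confining $L$ to a bounded region of the ample cone of $\mathrm{NS}(A)$ goes beyond what the paper argues (and would need a genuine justification), but the core reduction is the same.
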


In this special case one restricts to simple abelian varieties where the Zariski density of $A(K)$ is equivalent to having positive Mordell-Weil rank. Using a stronger height conjecture one obtains the following general statement, without any simplicity assumption, on the moduli space of principally polarized abelian varieties.

\begin{thm}\label{reg height thm intro}
Assume the stronger Lang-Silverman Conjecture \ref{LangSilV2}.
The set of $\overline{\mathbb{Q}}$-isomorphism classes of principally polarized abelian varieties $A$, defined over a fixed number field $K$, of dimension $g$ and rank $m_K$ bounded from above, with $A(K)$ Zariski dense in $A$ and regulator bounded from above is finite.
\end{thm}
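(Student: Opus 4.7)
The plan mirrors the argument for Theorem \ref{reg height thm intro simple}; the substantive change is the replacement of Conjecture \ref{LangSilV1} by the refined Conjecture \ref{LangSilV2}, which was designed precisely to bypass the simplicity assumption. The first input is the Northcott property of the Faltings height on the moduli space of principally polarized abelian varieties of dimension $g$ defined over $K$: for every $C>0$, only finitely many $\overline{\mathbb{Q}}$-isomorphism classes satisfy $\hFplus(A/K)\leq C$. It therefore suffices to bound $\hFplus(A/K)$ from above in terms of $g$, $d=[K:\mathbb{Q}]$, $m_K$ and the regulator.

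Next I would pass to the Mordell-Weil lattice $\Lambda = A(K)/A(K)_{\mathrm{tors}}$ equipped with the N\'eron-Tate height pairing attached to the principal polarization $L$; its covolume is $\Reg_L(A/K)$. Let $\lambda_1\leq\cdots\leq\lambda_{m_K}$ denote its successive minima. Minkowski's second theorem then yields a constant $c(m_K)>0$ with
\[
\Reg_L(A/K)\;\geq\; c(m_K)\prod_{i=1}^{m_K}\lambda_i^{2}.
\]
The Zariski density hypothesis forces $A(K)$ to lie outside any finite union of torsion translates of proper abelian subvarieties, and this is precisely the input the refined Conjecture \ref{LangSilV2} exploits. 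Applying it to points realizing the successive minima should produce a uniform lower bound
\[
\lambda_i^{2}\;\geq\; c'(g,d)\max\{1,\hFplus(A/K)\}, \qquad i=1,\dots,m_K,
\]
from which
\[
\Reg_L(A/K)\;\geq\; c(m_K)\bigl(c'(g,d)\max\{1,\hFplus(A/K)\}\bigr)^{m_K}.
\]
An upper bound on $m_K$ and $\Reg_L(A/K)$ then caps $\hFplus(A/K)$, and Northcott finishes the argument.

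The main obstacle is the third step. In the non-simple setting a rational point lying in a proper abelian subvariety can have canonical height far smaller than $\hFplus(A/K)$, so, in contrast with the simple case of Theorem \ref{reg height thm intro simple}, one cannot hope to control even $\lambda_1$ by invoking only the weaker Conjecture \ref{LangSilV1}. The entire role of the strengthened Conjecture \ref{LangSilV2} is to guarantee that the Zariski density hypothesis rules out such small-height configurations uniformly, so that \emph{every} successive minimum, and not merely those coming from a simple factor, inherits a lower bound in terms of the Faltings height. The delicate point is therefore not the geometry-of-numbers input nor the Northcott reduction, but the formulation and deployment of \ref{LangSilV2} in a way that translates Zariski density of $A(K)$ into a uniform lower bound on the whole basis of the Mordell-Weil lattice.
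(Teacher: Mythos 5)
Your skeleton (Minkowski's second theorem, the strengthened conjecture, Northcott for the Faltings height) is the same as the paper's, but the step you yourself flag as the delicate one is false as stated, and it is exactly where the non-simple case differs. Conjecture \ref{LangSilV2} does \emph{not} yield $\lambda_i^{2}\geq c'(g,d)\max\{1,\hFplus(A/K)\}$ for every $i=1,\dots,m_K$: its first alternative explicitly exempts points of bounded order modulo a proper abelian subvariety $B$ of bounded degree, and Zariski density of the \emph{group} $A(K)$ does not prevent the first several successive minima from being realized by such points. Concretely, take $A=E_1\times E_2$ with both elliptic curves of positive rank over $K$ (so $A(K)$ is Zariski dense and the product polarization is principal), with $E_1$ fixed and having a generator of small canonical height, and let $\hFplus(E_2/K)\to\infty$. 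Then $\hFplus(A/K)\to\infty$ while $\lambda_1^{2}\leq \widehat{h}_{A,L}((P_1,0))=\widehat{h}_{E_1,L_1}(P_1)$ stays bounded, so your uniform bound on all minima fails even assuming the conjecture; moreover the Mordell--Weil lattice splits orthogonally, so the regulator of $A$ is the product of the two elliptic regulators and there is no reason for it to grow like $\hFplus(A/K)^{m_K}$ -- the correct exponent in such families is $m_K-m_0$ (here $1$), not $m_K$.

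The paper repairs precisely this point via Lemma \ref{success} and Proposition \ref{reg height thm V2}. One introduces $m_0$, the maximal Mordell--Weil rank of a strict abelian subvariety of $A$, and proves, using the finite family $\mathcal{B}$ of subvarieties of bounded degree furnished by the first alternative of Conjecture \ref{LangSilV2}, the Gaudron--R\'emond avoidance lemma to pick near-minimizing points outside the union of the $B(K)$ of rank $<i$, an induction through chains of subvarieties, and Corollary \ref{rank height intro}, that $\lambda_i^{2}\gg i$ for all $i$ but $\lambda_i^{2}\gg\max\{1,\hFplus(A/K)\}$ only for $i>m_0$. Minkowski then gives $\Reg_L(A/K)\geq \bigl(c\max\{1,\hFplus(A/K)\}\bigr)^{m_K-m_0}$, and since Zariski density of $A(K)$ is equivalent to $m_K>m_0$, the exponent is at least $1$; with rank and regulator bounded this bounds $\hFplus(A/K)$, and the Northcott property of the Faltings height concludes. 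So to fix your argument you must split the successive minima at the index $m_0$ and make the application of the conjecture to the minimizing points precise through the avoidance argument; the uniform bound you ask Conjecture \ref{LangSilV2} to deliver is exactly what its first alternative was introduced to avoid claiming.
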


As explained in \cite{Paz16}, if one restricts to $g=1$ one can replace the Lang-Silverman conjecture by the ABC conjecture in the statements of Theorem \ref{reg height thm intro simple} and Theorem \ref{reg height thm intro}. 

We divide the work as follows. In section \ref{section def} we give the definitions of the regulator and of the Faltings height of an abelian variety. In section \ref{section lower} we prove Theorem \ref{height conductor intro}: it relies on the core of the work, Proposition \ref{semi stable height conductor}, which gives the semi-stable version. The final step is then given in Proposition \ref{height conductor}. In section \ref{section LS} we use the conjecture of Lang and Silverman to deduce Theorem \ref{reg height thm intro simple}. In section \ref{section LS strong} we discuss how a stronger conjecture of Lang and Silverman type imply Theorem \ref{reg height thm intro}. We conclude in section \ref{section conclusion} with a comparaison with number fields, extending the dictionnary of \cite{Paz14}.

\section{Definitions}\label{section def}

Let $S$ be a set. We will say that a function $f: S\to \mathbb{R}$ verifies a Northcott property on $S$ if for any real number $B$, the set $\{P\in{S}\,\vert\, f(P)\leq B\}$ is finite. 

Notation: the function denoted $\log$ is the reciprocal of the classical exponential function, so $\log e=1$ (we do not use the notation $\mathrm{ln}$). 
We will denote by $\mathcal{O}_K$ the ring of integers of $K$. If $K'$ is a finite extension of a number field $K$, we denote by $\mathcal{N}_{K'/K}$ the relative norm. If $\mathfrak{p}'$ is a prime ideal in $\mathcal{O}_{K'}$ above the prime ideal $\mathfrak{p}$ in $\mathcal{O}_K$, then $e_{\mathfrak{p}'/\mathfrak{p}}$ is the ramification index and $f_{\mathfrak{p}'/\mathfrak{p}}$ stands for the residual degree.

\subsection{Regulators of abelian varieties}

Let $A/K$ be an abelian variety over a number field $K$ polarized by an ample and symmetric line bundle $L$. Let $m_K$ be the Mordell-Weil rank of $A(K)$. Let $\widehat{h}_{A,L}$ be the N\'eron-Tate height associated with the pair $(A,L)$. Let $<.,.>$ be the associated bilinear form, given by $$<P,Q>=\frac{1}{2}\Big(\widehat{h}_{A,L}(P+Q)-\widehat{h}_{A,L}(P)-\widehat{h}_{A,L}(Q)\Big)$$ for any $P,Q\in{A(\overline{\mathbb{Q}})}$. This pairing on $A\times A$ depends on the choice of $L$.

\begin{defin}\label{reg abvar}
Let $P_1, ..., P_{m_K}$ be a basis of the lattice $A(K)/A(K)_{\mathrm{tors}}$, where $A(K)$ is the Mordell-Weil group. The regulator of $A/K$ is defined by $$\mathrm{Reg}_{L}(A/K)=\vert \det(<P_i,P_j>_{1\leq i,j\leq m_K})\vert,$$ where by convention an empty determinant is equal to $1$.
\end{defin}

As for the height, the regulator of an abelian variety depends on the choice of an ample and symmetric line bundle $L$ on $A$. 

There is a more intrinsic way of defining a regulator, that doesn't depend on the choice of $L$. Start with the natural pairing on the product of $A$ with its dual abelian variety $\check{A}$ given by the Poincar\'e line bundle $\mathcal{P}$: for any $P\in{A(\overline{\mathbb{Q}})}$ and any $Q\in{\check{A}(\overline{\mathbb{Q}})}$, define $<P,Q>=\widehat{h}_{A\times \check{A},\mathcal{P}}(P,Q)$. Next choose a base $P_1, ..., P_{m_K}$ of $A(K)$ modulo torsion and a base $Q_1, ..., Q_{m_K}$ of $\check{A}(K)$ modulo torsion. Then define $$\Reg(A/K)=\vert \det(<P_i, Q_j>_{1\leq i,j\leq m_K})\vert.$$

Let us recall how these two regulators are linked (see for instance \cite{Hin} page 172). Let $\Phi_L:A\to \check{A}$ be the isogeny given by $\Phi_L(P)=t_{P}^{*}L\otimes L^{-1}$. One has the formula $$\widehat{h}_{A,L}(P)=-\frac{1}{2}<P,\Phi_L(P)>.$$ 
Hence if $u$ is the index of the subgroup $\Phi_L(\mathbb{Z}P_1\oplus ... \oplus\mathbb{Z}P_{m_K})$ in $\check{A}(K)$ modulo torsion, one has 
\begin{equation}\label{regulators}
\mathrm{Reg}_L(A/K)=u2^{-m_K}\mathrm{Reg}(A/K).
\end{equation}

\noindent Let us remark that when $L$ induces a principal polarization, the index $u$ is equal to $1$. Thus Theorem \ref{reg height thm intro} is valid with both regulators.

\subsection{The Faltings height}

Let $A$ be an abelian variety defined over a number field $K$, of dimension $g\geq 1$. Recall that ${\mathcal O}_K$ is the ring of integers of $K$ and let $\pi\colon {\mathcal A}\longrightarrow \Spec(\mathcal{O}_K) $
be the N\'eron model of $A$ over $\Spec(\mathcal{O}_K)$. Let $\varepsilon\colon \Spec(\mathcal{O}_K)\longrightarrow {\mathcal A}$ be the zero section of $\pi$ and let
$\omega_{{\mathcal A}/\mathcal{O}_K}$ be the maximal exterior power (the determinant) of the sheaf of relative differentials
$$\omega_{{\mathcal A}/\mathcal{O}_K}:=\varepsilon^{\star}\Omega^g_{{\mathcal
A}/\mathcal{O}_K}\simeq\pi_{\star}\Omega^g_{{\mathcal A}/\mathcal{O}_K}\;.$$

For any archimedean place $v$ of $K$, let $\sigma$ be an embedding of $K$ in $\mathbb{C}$ associated to $v$. The associated line bundle
$$\omega_{{\mathcal A}/\mathcal{O}_K,\sigma}=\omega_{{\mathcal A}/\mathcal{O}_K}\otimes_{{\mathcal O}_K,\sigma}\mathbb{C}\simeq H^0({\mathcal
A}_{\sigma}(\mathbb{C}),\Omega^g_{{\mathcal A}_\sigma}(\mathbb{C}))\;$$
is equipped with a natural $L^2$-metric $\Vert.\Vert_{v}$ given by
$$\Vert s\Vert_{v}^2=\frac{i^{g^2}}{(2\pi)^{2g}}\int_{{\mathcal
A}_{\sigma}(\mathbb{C})}s\wedge\overline{s}\;.$$

The ${\mathcal O}_K$-module $\omega_{{\mathcal A}/\mathcal{O}_K}$ is of rank $1$ and together with the hermitian norms
$\Vert.\Vert_{v}$ at infinity it defines an hermitian line bundle 
$\overline{\omega}_{{\mathcal A}/\mathcal{O}_K}=({\omega}_{{\mathcal A}/\mathcal{O}_K}, (\Vert .\Vert_v)_{v\in{M_K^\infty}})$ over $\mathcal{O}_K$. It has a well defined Arakelov degree
$\widehat{\degr}(\overline{\omega}_{{\mathcal A}/\mathcal{O}_K})$. Recall that for any hermitian line
bundle $\overline{\mathcal L}$ over $\Spec(\mathcal{O}_K)$ the degree of $\overline{\mathcal L}$ in the sense of Arakelov
is defined as
$$\widehat{\degr}(\overline{\mathcal L})=\log\#\left({\mathcal L}/{s{\mathcal
O}}_K\right)-\sum_{v\in{M_{K}^{\infty}}}d_v\log\Vert
s\Vert_{v}\;,$$
where $s$ is any non zero section of $\mathcal L$. The resulting number does not depend on the choice
of $s$ in view of the product formula on the number field $K$. 

The Arakelov degree of this metrized bundle will give a translate of the classical Faltings height.

\begin{defin}\label{faltings}  The height of $A/K$ is defined
as
$$\hFplus(A/K):=\frac{1}{[K:\mathbb{Q}]}\widehat{\degr}(\overline{\omega}_{{\mathcal
A}/\mathcal{O}_K})\;.$$
\end{defin}
This non-negative real number doesn't depend on any choice of polarization on $A$. When $A/K$ is semi-stable, this height only depends on the $\overline{\mathbb{Q}}$-isomorphism class of $A$. It is just a translate of the classical Faltings height $h_{F}(A/K)$, we have $\hFplus(A/K)=h_{F}(A/K)+\frac{g}{2}\log(2\pi^2)$. If $A/K$ is not semi-stable, one may use Chai's base change conductor as in the formula (\ref{base change conductor}) in the sequel as a complementary definition. See \cite{Falt} Satz~1, page 356 and 357 for its basic properties, and for a comparison with the theta height in \cite{Paz3} (based on ideas of Bost and David). We prefer to use this translate because it gives cleaner inequalities (see the jacobian case in Proposition \ref{semi stable height conductor} for instance). We recall here four classical properties: firstly, if $A=A_1\times A_2$ is a product of abelian varieties, one has $\hFplus(A/K)=\hFplus(A_1/K)+\hFplus(A_2/K)$. Secondly, the dual abelian variety of $A$ has the same height as $A$ by a result of Raynaud. Thirdly, if $K'/K$ is a number field extension, one has $\hFplus(A/K')\leq \hFplus(A/K)$. Finally if $A/K$ is semi-stable, one defines the stable height by $\hFplus(A/\overline{\mathbb{Q}}):=\hFplus(A/K)$, which is invariant by number field extension.

At finite places we focus on the bad reduction locus with the following quantity.

\begin{defin}
Let $A$ be an abelian variety over a number field $K$. Let $\mathcal{A}\to \Spec(\mathcal{O}_K)$ be its N\'eron model. Let $\mathfrak{p}$ be a prime of $\mathcal{O}_K$. If the special fiber $\mathcal{A}_\mathfrak{p}$ is an abelian variety, we say that $\mathfrak{p}$ is a prime of good reduction for $A$, otherwise the prime is of bad reduction. We define $$N_{A/K}^0=\prod_{\substack{\mathfrak{p}\subset{\mathcal{O}_K}, \\ \mathfrak{p}\,\mathrm{bad}\, \mathrm{for}\, A}} \mathcal{N}_{K/\mathbb{Q}}(\mathfrak{p}).$$ 
\end{defin}

Regarding archimedean places, let us recall what the injectivity diameter is. 
\begin{defin}
Let $A$ be a complex abelian variety. Let $L$ be a polarization on $A$. Let $T_A$ be the tangent space of $A$, let $\Gamma_A$ be its period lattice and $H_L$ the associated Riemann form on $T_A$. The injectivity diameter is the positive number $\displaystyle{\rho(A,L)=\min_{\gamma\in{\Gamma\backslash \{0\}}}\sqrt{H_L(\gamma,\gamma)}}$, \textit{i.e.} the first minimum in the successive minima of the period lattice of $A$.
\end{defin}

\section{A lower bound for the Faltings height}\label{section lower}

We start by recalling Masser's Matrix Lemma in Bost version (later precised by Autissier and Gaudron-R\'emond). We then give a lower bound for the Faltings height by the norm of the bad reduction primes in the semi-stable case, then we obtain the result in the general case by base change, hence deriving a proof of Theorem \ref{height conductor intro} and Corollary \ref{height conductor injectivity}. This implies an upper bound on the Mordell-Weil rank of abelian varieties over number fields in terms of the Faltings height.

\subsection{Archimedean places}

Let us start by the Matrix Lemma given in Th\'eor\`eme 1.1 page 345 of Gaudron and R\'emond \cite{GaRe3} (see also Autissier's \cite{Aut} for good explicit constants if the polarization is principal ; the first version was given by Bost for principally polarized abelian varieties, as stated in Autissier's paper). We give it here with the height $\hFplus(A/K)=h_F(A/K)+\frac{g}{2}\log(2\pi^2)$.

\begin{thm} (Matrix Lemma)\label{matrix}
Let $K$ be a number field such that $A$ is defined over $K$, polarized by an ample line bundle $L$. For any archimedean place $v$ of $K$, denote by $\rho(A_v, L_v)$ the injectivity diameter of the complex polarized abelian variety $(A_v, L_v)$, then
$$\frac{1}{d}\sum_{v\in{M_K^{\infty}}}d_v\rho(A_v,L_v)^{-2} \leq 16\hFplus(A/\overline{\mathbb{Q}})+39g.$$
\end{thm}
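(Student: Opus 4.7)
The plan is to treat each archimedean place independently, reduce to a canonical form of the period matrix, and then link the injectivity diameter and the Faltings contribution of that place through a theta function estimate; summing over $v\in M_K^\infty$ with weights $d_v$ will produce the inequality.

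First, fix an archimedean place $v$ and write $A_v(\mathbb{C})\simeq \mathbb{C}^g/\Gamma_v$ with Riemann form $H_{L_v}$. Choose a symplectic basis of $\Gamma_v$ adapted to $L_v$ so that, after rescaling by the elementary divisors of the polarization, the normalized period matrix is of the form $(\mathbf{1}_g,\tau_v)$ with $\tau_v=X_v+iY_v$ in the Siegel upper half space $\mathcal{H}_g$. In these coordinates, $\rho(A_v,L_v)^{-2}$ is controlled by the operator norm of $Y_v^{-1}$, more precisely by the largest eigenvalue of $Y_v^{-1}$ after weighting by the elementary divisors of $L_v$. Minkowski reduction, applied to $Y_v$ (seen as a quadratic form on $\mathbb{R}^g$), guarantees that we can control $\|Y_v^{-1}\|$ by a uniform multiple of $\mathrm{tr}(Y_v^{-1})$ up to a $g$-dependent constant.

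Second, identify the Hodge-bundle contribution at $v$ to $\hFplus(A/K)$ with an explicit expression involving $\det(Y_v)$. The standard computation, using the $L^2$ metric
\[
\|s\|_v^2=\frac{i^{g^2}}{(2\pi)^{2g}}\int_{A_v(\mathbb{C})} s\wedge\overline{s},
\]
and the canonical section $dz_1\wedge\cdots\wedge dz_g$, gives that the local archimedean term is $\tfrac12\log\det(Y_v)+O_g(1)$. The crucial ingredient is then a theta-function bound: by the Jacobi inversion formula and positivity of $\vert\theta(0,\tau_v)\vert^2\det(Y_v)^{1/2}$, one finds that if some eigenvalue of $Y_v$ is very small, then the associated theta series is bounded and hence $\det(Y_v)$ must compensate, forcing $\tfrac12\log\det(Y_v)$ to grow at least linearly in the largest eigenvalue of $Y_v^{-1}$. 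Concretely, one extracts a pointwise inequality of the form
\[
\rho(A_v,L_v)^{-2}\leq 16\bigl(\tfrac12\log\det(Y_v)+\tfrac{g}{2}\log(2\pi^2)\bigr)+39g,
\]
with the constants $16$ and $39g$ coming from the explicit theta-series optimization carried out by Bost, Autissier and Gaudron--R\'emond.

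Third, average over archimedean places. Since $\hFplus(A/K)$ is the sum, with weights $d_v/d$, of the local archimedean contributions (plus non-negative finite place terms, which only help because they can be dropped), the place-by-place estimates assemble into
\[
\frac{1}{d}\sum_{v\in M_K^{\infty}} d_v\rho(A_v,L_v)^{-2}\leq 16\hFplus(A/\overline{\mathbb{Q}})+39g,
\]
where we also use that the height is invariant under extension to a field of semi-stable reduction. The main obstacle is the second step: squeezing the constants down to the clean values $16$ and $39g$ requires the sharpened theta-function estimates developed by Autissier and Gaudron--R\'emond, and a non-trivial Minkowski-style linear algebra argument to convert bounds on $\det(Y_v)$ into bounds on the smallest successive minimum, uniformly in the elementary divisors of the (possibly non-principal) polarization.
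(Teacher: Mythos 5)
Your outline breaks down at the decisive second step, and the gap is not cosmetic. First, the Faltings height does not split canonically into local archimedean terms ``$\tfrac12\log\det(Y_v)+O_g(1)$'': the Arakelov degree is computed with one global nonzero section $s$ of $\omega_{\mathcal{A}/\mathcal{O}_K}$, and only for such an $s$ is the finite contribution $\log\#(\omega/s\mathcal{O}_K)$ non-negative and droppable. The form $dz_1\wedge\cdots\wedge dz_g$ adapted to a symplectic basis at the place $v$ is chosen place by place, is in general not an integral global section, and for it the archimedean term is $-\tfrac12\log\det Y_v+c_g$ (note the sign), so your bookkeeping does not produce a lower bound for $\hFplus(A/K)$. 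Second, the asserted pointwise inequality $\rho(A_v,L_v)^{-2}\leq 16\bigl(\tfrac12\log\det(Y_v)+\tfrac{g}{2}\log(2\pi^2)\bigr)+39g$ is simply false: already for $g=1$ and $\tau_v=iy$ with $y$ large one has $\rho(A_v,L_v)^{-2}=y$ while $\tfrac12\log\det Y_v=\tfrac12\log y$, so the left side grows linearly and the right side logarithmically. The reason the Matrix Lemma is nonetheless true is that the archimedean norm of a suitable \emph{integral} section (built from theta functions, as in Bost, Autissier, and Gaudron--R\'emond) decays like $e^{-c\,\rho(A_v,L_v)^{-2}}$, so the height dominates a quantity linear in $\rho_v^{-2}$ on average over the places; in your $g=1$ example this is visible in $-\log\vert\Delta(\tau)\vert\sim 2\pi y$, not in $\log\det Y$. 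That exponential-decay mechanism, together with the treatment of non-principal polarizations (which is exactly what Gaudron--R\'emond's general theorem supplies, and which a Minkowski-reduction argument on $Y_v$ alone does not give), is the missing content of your proof.

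For comparison: the paper does not reprove this statement at all. It quotes Th\'eor\`eme 1.1 of Gaudron--R\'emond (with Autissier's and Bost's earlier versions for principal polarizations), and the only verification it performs is the translation of normalizations, namely $\hFplus(A/K)=h_F(A/K)+\tfrac{g}{2}\log(2\pi^2)$ and the inequality $\hFplus(A/K)\geq\hFplus(A/\overline{\mathbb{Q}})$ so that the stable height may be used on the right-hand side. If your goal is to match the paper, that bookkeeping is all that is required; if your goal is a self-contained proof, you would need to carry out the theta-section (or slope-theoretic) argument sketched above rather than the determinant comparison you propose.
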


The Matrix Lemma is true for the stable height $\hFplus(A/\overline{\mathbb{Q}})$, and we always have $\hFplus(A/K)\geq \hFplus(A/\overline{\mathbb{Q}})$. Here the polarization is not necessarily principal.

\subsection{Bad reduction places}

We compare the height and the size of the bad primes of $A$ over the base field $K$. We first give a proof of the inequality in the semi-stable case and then obtain the general result using base change properties given in the next paragraph. The following proposition gives the result in the semi-stable case. Let us first recall the case of elliptic curves, studied in \cite{Paz14}, where the argument is direct and produces easy constants. Let $A=E$ be an elliptic curve. One has the exact formula 
\[
\hFplus(E/K)=\frac{1}{12d}\left[\log \mathcal{N}_{K/\mathbb{Q}}(\Delta_E)-\sum_{v\in{M_K^{\infty}}}d_v \log\Big(\vert \Delta(\tau_v) \vert (2\Ima\tau_v)^6\Big)\right],
\]
where $\Delta_E$ is the minimal discriminant of the curve, $\tau_v$ is a period in the fundamental domain such that $E(\overline{K}_v)\simeq\mathbb{C}/\mathbb{Z}+\tau_v\mathbb{Z}$ and $\Delta(\tau_v)=q\prod_{n=1}^{+\infty}(1-q^n)^{24}$ is the modular discriminant, with $q=\exp(2\pi i\tau_v)$. A direct analytic estimate using $\Ima\tau_v\geq \sqrt{3}/2$ provides us with 
\begin{equation}\label{elliptic}
\hFplus(E/K)\geq \frac{1}{12d}\log N^0_{E/K}.
\end{equation}

Let's move on to higher dimension.

\begin{prop}\label{semi stable height conductor}
Let $A/K$ be a semi-stable abelian variety of dimension $g$ and defined over a number field $K$ of degree $d$. Then there exists quantities $c_5=c_5(g)>0$ and $c_6=c_6(g)\in{\mathbb{R}}$ such that $$\hFplus(A/K)\geq c_5 \frac{1}{d}\log N^0_{A/K}+c_6.$$ The explicit values $c_5=(12g)^{-12g^{12g^{3g}}}$ and $c_6=-1/c_5$ are valid. If $A$ is the jacobian of a curve of genus $g\geq 1$, then one can even take $c_5=\frac{1}{12}$ and $c_{6}=0$.
\end{prop}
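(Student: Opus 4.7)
The plan is to split the argument in two parts: first establish the inequality directly for jacobians of semi-stable curves, then reduce the general semi-stable case to the jacobian case via an explicit Bertini construction and explicit isogeny estimates.

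For the jacobian case $A=J_C$, the idea is to invoke Faltings' arithmetic Noether formula on the minimal regular semi-stable model $\pi:\mathcal{C}\to\Spec\mathcal{O}_K$ of $C$, which with the $\hFplus$ normalization can be written
$$12\,d\,\hFplus(J_C/K) \;=\; \overline{\omega}_{\mathcal{C}/\mathcal{O}_K}\cdot \overline{\omega}_{\mathcal{C}/\mathcal{O}_K} \;+\; \sum_{\mathfrak{p}\text{ finite}} \delta_\mathfrak{p}\,\log\mathcal{N}_{K/\mathbb{Q}}(\mathfrak{p}) \;+\; \sum_{v\in M_K^\infty} d_v\,\delta_v,$$
where $\delta_\mathfrak{p}$ counts the singular points of the reduction at $\mathfrak{p}$ (hence $\delta_\mathfrak{p}\geq 1$ precisely when $\mathfrak{p}$ is of bad reduction) and $\delta_v$ is the archimedean Faltings invariant shifted by the $(g/2)\log(2\pi^2)$ translate built into $\hFplus$. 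Semi-stability yields $\overline{\omega}^2\geq 0$ (arithmetic Hodge index together with Zhang's inequality), and the translate is precisely calibrated so that the shifted archimedean sum is non-negative, exactly as in the elliptic computation recalled just before the statement, where the bound $\Ima\tau_v\geq\sqrt{3}/2$ makes $\log(|\Delta(\tau_v)|(2\Ima\tau_v)^6)\leq 0$; the higher-genus analogue follows from a fundamental-domain estimate on the Siegel upper half space. Keeping only the finite contribution already gives $\hFplus(J_C/K)\geq\tfrac{1}{12d}\log N^0_{J_C/K}$, which yields $c_5=1/12$ and $c_6=0$ for jacobians.

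For general semi-stable $A/K$ of dimension $g$, I would then produce a jacobian surjecting onto $A$. The plan is to embed $A\hookrightarrow\mathbb{P}^N_K$ via a suitable power of a symmetric ample line bundle with $N$ and degree controlled in $g$, and apply an explicit Bertini theorem in the Cadoret--Tamagawa form \cite{CaTa} to intersect $A$ with $g-1$ generic hyperplane sections. This produces a smooth geometrically irreducible curve $C\subset A$ whose genus $g_C$ is bounded by an explicit (essentially exponential) function of $g$. Choosing the sections generically, $C$ is not contained in any proper abelian subvariety of $A$, so the Abel--Jacobi map $J_C\to A$ is surjective, and Poincar\'e reducibility provides an isogeny $\varphi: J_C\to A\times B$ of explicit degree. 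Applying R\'emond's explicit isogeny estimate (Theorem 1.3, p.~760 of \cite{Remond}) to $\varphi$ and its dual compares $\hFplus(J_C/K)$ with $\hFplus(A/K)+\hFplus(B/K)$ up to an error $\kappa(g_C,\deg\varphi)$. Non-negativity of $\hFplus(B/K)$ up to an explicit shift allows one to discard $B$; since any bad prime of $A$ is bad for $J_C$ we also have $\log N^0_{A/K}\leq \log N^0_{J_C/K}$, and plugging the jacobian bound into this comparison yields the desired lower bound for $\hFplus(A/K)$.

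The hard part will be the explicit bookkeeping of constants through this chain. The Cadoret--Tamagawa Bertini step already bounds $g_C$ by an exponential in $g$; R\'emond's isogeny inequality then introduces a three-level tower of exponents in $\dim J_C = g_C$; and the term $\log\deg\varphi$ in R\'emond's estimate must be absorbed without damaging the linear dependence on $\log N^0_{A/K}$. Composing these three ingredients in the right order is what forces the four-level exponential shape $c_5=(12g)^{-12g^{12g^{3g}}}$ and $c_6=-1/c_5$ announced in the proposition, and as observed in the introduction this is essentially the best one can hope for with the present strategy. The semi-stability hypothesis will subsequently be removed in Proposition \ref{height conductor} by passing to a finite extension where $A$ acquires semi-stable reduction, controlling the change of the height via Chai's base-change conductor.
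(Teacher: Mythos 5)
Your Step~1 for jacobians follows the paper's route (Moret-Bailly's arithmetic Noether formula, $(\omega_C\cdot\omega_C)\geq 0$, plus a lower bound for the archimedean term), although the input you actually need there is Wilms' explicit lower bound $\delta(C_\sigma)\geq -2g\log(2\pi^4)$ on Faltings' delta invariant \cite{Wil16} — a genuinely nontrivial theorem, not a fundamental-domain estimate — and with it one indeed gets $c_5=1/12$, $c_6=0$. The genuine gap is in your reduction of the general semi-stable case to jacobians. What the argument requires is an \emph{upper} bound $\hFplus(J_C)\leq c(g)\,(\hFplus(A)+1)$, and your isogeny step cannot produce it: from an isogeny $J_C\to A\times B$ and a Faltings-type height comparison you only get $\hFplus(J_C)\leq \hFplus(A)+\hFplus(B)+\tfrac12\log\deg\varphi$, and to conclude you would have to bound $\hFplus(B)$ \emph{from above}; non-negativity of $\hFplus(B)$ is a lower bound and goes the wrong way (it would only let you drop $B$ in the reverse inequality, which bounds $\hFplus(A)$ from above and is useless here). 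Since nothing in your construction controls $B$, the height of $J_C$ could a priori be arbitrarily large compared with $\hFplus(A)$. Moreover Th\'eor\`eme 1.3 of \cite{Remond} is not an isogeny estimate: it bounds the theta height $h_\Theta(J_C,L_\Theta)$ by the projective height of the curve $C\subset\mathbb{P}^N$, and that is exactly the tool which replaces your isogeny comparison — but to use it one must first prove $h_{\mathbb{P}^N}(C)\leq c(g)(h_{\mathbb{P}^N}(A)+1)$ by cutting with hyperplanes of \emph{small height} (Proposition 2.3 of \cite{Remond}) and then pass between projective, theta and Faltings heights via \cite{DavPhi} and \cite{Paz3}. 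None of this height bookkeeping appears in your plan; in the paper the surjection $J_C\to A$ (hence $J_C\sim A\times B$) is used only to transfer bad reduction through N\'eron--Ogg--Shafarevich, never to compare heights.

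Two further points are needed to make the constants depend only on $g$, as claimed. First, a general semi-stable $A$ need not admit a projective embedding of degree bounded in $g$, so one must first reduce to the principally polarized case by Zarhin's trick ($A^4\times\check{A}^4$), at a harmless cost of a factor $8$ in the height; your ``suitable power of a symmetric ample line bundle'' does not address this. Second, hyperplanes of small height meeting the Bertini open set cannot in general be found over $K$, nor over any extension of degree bounded in terms of $g$ (Northcott), so the curve $C$ is forced to live over an extension $K''$ of unbounded degree; one must then compare $\frac{1}{d''}\sum_{\mathfrak{p}''}\delta_{\mathfrak{p}''}(C)\log\mathcal{N}_{K''/\mathbb{Q}}(\mathfrak{p}'')$ with $\frac{1}{d}\log N^0_{A/K}$, and this costs a factor $\max_{\mathfrak{p}''\vert\mathfrak{p}}e_{\mathfrak{p}''/\mathfrak{p}}$. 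The paper keeps that factor bounded by $2\cdot 16^{4g^2}$ by choosing the hyperplane coefficients among Minkowski-small generators of fields in Maire's infinite everywhere-unramified tower \cite{Mai00}, so that $K''$ is unramified over $K'k$ with $K'=K[A[16]]$ of degree controlled by \cite{GaRe2}. Your ``generic hyperplane sections'' skip both the height control and the ramification control, and without them the constants cannot be made to depend on $g$ alone, nor take the announced explicit form.
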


\begin{proof}
The proof is divided into six steps: we start by the case of jacobians in Step 1. Then for general abelian varieties, we reduce to the case of principally polarized abelian varieties in Step 2 by Zarhin's trick.  We make use of several projective heights (theta height, height \`a la Philippon, \ldots) to work on the inequality in Step 3. Then we explain in Step 4 how to find a curve of small height on a principally polarized abelian variety (by a Bertini Theorem) with the extra constraint that it is defined over a finite extension of $K$ with controlled ramification, that will help us reduce the general case to the first case of jacobians. We show that the abelian variety we started with is a quotient of the jacobian of this curve (by classical arguments) in Step 5 and we can finally conclude (via N\'eron-Ogg-Shafarevich) by putting everything together in Step 6. As $A/K$ is semi-stable, its Faltings height is invariant by number field extension, this will be used in the sequel.
\\

\emph{Step 1}. We start by proving the result for jacobians of curves. If $A=J_C$ is the jacobian of a curve $C$, the argument may be presented as follows. By the arithmetic Noether's formula of \cite{MB} Th\'eor\`eme 2.5 page 496 one has for a curve $C$ of genus $g\geq 1$ (with semi-stable jacobian $J_C$) over a number field $K$ of degree $d$,
$$12d\hFplus(J_C/K)=(\omega_C\cdot \omega_C)+\sum_{\substack{\mathfrak{p}\, prime \\ \mathfrak{p}\subset {\mathcal{O}_K}}} \delta_{\mathfrak{p}}(C)\log\mathcal{N}_{K/\mathbb{Q}}(\mathfrak{p})+\sum_{\sigma:K\hookrightarrow \mathbb{C}}\delta(C_\sigma)+dg\log(2^2\pi^8),$$ where the auto-intersection $(\omega_C\cdot \omega_C)$ is non-negative, $\delta(C_\sigma)$ is the delta invariant of Faltings of the complex curve $C_\sigma$ and $\delta_{\mathfrak{p}}(C)$ is the number of singular points in the fiber $C_\mathfrak{p}$. It is zero if and only if $\mathfrak{p}$ is a prime ideal in $\mathcal{O}_K$ of good reduction for $C$. A remark is that the quantity 
\begin{equation}\label{discri}
\frac{1}{d}\sum_{\mathfrak{p}\, prime} \delta_{\mathfrak{p}}(C)\log\mathcal{N}_{K/\mathbb{Q}}(\mathfrak{p})
\end{equation}
is invariant by number field extension of the base field $K$. Indeed, if one proceeds with a base change from $\mathcal{O}_K$ to $\mathcal{O}_{K'}$, each double point in the fiber over a prime $\mathfrak{p}$ of $C/K$ becomes singular in the fiber over primes $\mathfrak{p}'\vert \mathfrak{p}$ of $C/K'$ with thickness equal to the ramification index $e_{\mathfrak{p}'/\mathfrak{p}}$, so the number of double points gets multiplied by $e_{\mathfrak{p}'/\mathfrak{p}}$ by passing from $\mathfrak{p}$ to $\mathfrak{p}'$, see the proof of Lemma 1.12 in \cite{DeMu69}. 

One has $(\omega_C\cdot \omega_C)\geq 0$ and $\delta(C_\sigma)\geq -2g\log2\pi^4$ by \cite{Wil16}, hence $$(\omega_C\cdot \omega_C)+\sum_{\sigma:K\hookrightarrow \mathbb{C}}\delta(C_\sigma)\geq d\cdot c_7(g)$$ where one can take $c_7(g)=-2g\log2\pi^4$. (Note that using the second inequality of Proposition 2.4.8 page 102 of \cite{Java14} we would get $(\omega_C\cdot \omega_C)+\sum_{\sigma:K\hookrightarrow \mathbb{C}}\delta(C_\sigma)\geq -4dg^2$.)

It proves that the height of $J_C$ satisfies
\begin{equation}\label{curves}
\hFplus(J_C/K)\geq \frac{1}{12d}\sum_{\mathfrak{p}\, prime} \delta_{\mathfrak{p}}(C)\log\mathcal{N}_{K/\mathbb{Q}}(\mathfrak{p})
+c_8(g),
\end{equation}

\noindent for $c_8(g)=g\log(2^2\pi^8)-2g\log2\pi^4=0$. This completes the statement for jacobians, because any bad prime for $J_C$ is also a bad prime for $C$, so we have $\delta_{\mathfrak{p}}(C)\geq 1$ for any bad prime of $J_C$. We now aim for a way to reduce to the case of jacobians.
\\

\emph{Step 2}. We may assume, using Zarhin's trick, that the abelian variety is principally polarized. Indeed if $\check{A}$ stands for the dual of $A$, the abelian variety $Z(A)=A^4\times \check{A}^4$ carries a principal polarization, $\hFplus(Z(A)/K)=8\hFplus(A/K)$ and $N^{0}_{Z(A)/K}=N^{0}_{A/K}$. It will have a little cost on the value of the quantities $c_5$ and $c_6$. Let us now fix a principal polarization $L$ on $A$. 
\\

\emph{Step 3}. We will use the theory of Mumford theta coordinates as in the article of \cite{DavPhi} pages 646--652, provided we do a field extension $K'/K$ that enables us to choose a Mumford theta structure of level $4$. The choice $K'=K[A[16]]$ is valid, and Lemma 4.7 page 2078 of \cite{GaRe2} implies 
\begin{equation}\label{deg16}
[K':K]\leq 16^{4g^2}.
\end{equation}

We choose an embedding $\Theta_{16}: A\to \mathbb{P}^{16^g -1}$ given by the theta sections of $L^{\otimes 16}$ and we define the theta height of $(A,L)$ by $h_{\Theta}(A,L)=h(\Theta_{16}(O_A))$. We will in fact show the lower bound for the theta height of $A$: by virtue of the following inequality given in \cite{Paz3} 
\begin{equation}\label{thetafaltings}
\vert h_{\Theta}(A,L)-\frac{1}{2}\hFplus(A/\overline{\mathbb{Q}})\vert \leq 6\cdot4^{2g}\log(4^{2g}) \log( h_{\Theta}(A,L)+2),
\end{equation}
it will lead to the lower bound we seek for the Faltings height of $A$ as explained in Step 6.

By Proposition 3.9 of \cite{DavPhi} page 665, one has for any algebraic subvariety $V\subset A$ the inequality (where $N=16^g-1$) $$\vert \widehat{h}_{\mathbb{P}^N}(V)-h_{\mathbb{P}^N}(V)\vert \leq c_{9}(g, \dim V, \deg V, h_{\Theta}(A,L)),$$ where ${h}_{\mathbb{P}^N}(V)$ is the height of the variety $V$ as defined in \cite{DavPhi} page 644, the height $\widehat{h}_{\mathbb{P}^N}(V)$ is defined in \cite{Phi} in Proposition 9 and the quantity $c_{9}(g, \dim V, \deg V, h_{\Theta}(A,L))>0$ can be taken to be $(4^{g+1}h_{\Theta}(A,L)+3g\log2)\cdot(\dim V+1)\cdot\deg V$. Picking $V=A$, one gets $\widehat{h}_{\mathbb{P}^N}(A)=0$, $\dim A=g$, $\deg_LA = g!$ (the polarization is principal) and 
\begin{equation}\label{h1}
h_{\mathbb{P}^N}(A)\leq c_{10}(g) (h_{\Theta}(A,L)+1),
\end{equation}
 where $c_{10}(g)>0$ only depends on the dimension of $A$, and one can take $c_{10}(g)=4^{g+1}(g!) (g+1)$. Hence giving a lower bound on the height $h_{\mathbb{P}^N}(A)$ will imply a lower bound on the theta height, which in turn will imply a lower bound on the Faltings height by (\ref{thetafaltings}).

By Theorem 1.3 page 760 of \cite{Remond} and Proposition 1.1 page 760 of \cite{Remond} one has that for any curve $C$ in $\mathbb{P}^N$ of genus $g_0$ and degree $\deg C$ there exists a quantity $c_{11}(g_0,\deg C)>0$ such that 
\begin{equation}\label{h2}
h_{\Theta}(J_C,L_\Theta)\leq c_{11}(g_0,\deg C) (h_{\mathbb{P}^N}(C)+1),
\end{equation}
where $L_{\Theta}$ is the polarization associated to the theta divisor on $J_C$. As $C$ is embedded into its jacobian by a theta embedding, one has $\deg(C)=g_0$ and one can even take $c_{11}=(6g_0)^{121g_0 8^{g_0}}$.
\\

\emph{Step 4}. The next goal is now to find an algebraic curve $C$ on $A$ of genus $g_0\leq c_{12}(g)$ such that $h_{\mathbb{P}^N}(C)\leq c_{12}(g) h_{\mathbb{P}^N}(A)$. The proposition is already proved for $g=1$, we may well suppose that $g\geq 2$ from now on. We will cut $A$ by $g-1$ hyperplanes $H_1, ..., H_{g-1}$ in general position of height $h(H_i)\leq c_{13}(g) h_{\mathbb{P}^N}(A)$. 
Using Bertini's Theorem given in Theorem II.8.18 of \cite{Hart} page 179, there exists a dense open subset $U$ such that for any hyperplane $H$ in $U$, the intersection $A\cap H$ is non-singular and connected. As $\overline{\mathbb{Q}}$ is algebraically closed, one has $U(\overline{\mathbb{Q}})\neq \emptyset$, so there exist hyperplanes $H$ with coordinates in $\overline{\mathbb{Q}}$ and $A\cap H$ a geometrically connected non-singular variety in $\mathbb{P}^N$. To be able to choose hyperplans $H_i$ with height $h(H_i)\leq c_{13}(g) h_{\mathbb{P}^N}(A)$, we use the following argument: assume we have an infinite set $S_M$ of algebraic numbers of height less that $M$, where $M\geq 0$ is a fixed real number. This set can be infinite because we don't impose an upper bound the degree of these algebraic numbers. Consider the infinite set of all lines in the dual projective space $\check{\mathbb{P}}^{N}$ with coefficients in $S_M$. As $U$ is an open dense subset, its complement can't contain all these lines, so there exists infinitely many lines intersecting $U$. Pick one of these lines. It provides us with the desired hyperplane $H_i$ in $\mathbb{P}^N$. Repeat the argument $g-1$ times to obtain a smooth curve $C$, geometrically connected on $A$, of genus $g_0$. Furthermore, we would like to ensure that the resulting field extension used to define $C$ will remain as little ramified as possible. The choice of the set $S_M$ is then crucial, we will now take the time to explain how it is done.

Classical existence theorems for infinite unramified extensions of a given number field often come from the application of the Golod-Shafarevich inequality (see \cite{GoSha64}). A quadratic field with at least 5 different prime factors generally admits such an extension. The following result is of a similar spirit. Let $k=\mathbb{Q}(\sqrt{-643\cdot 1318279381})$. By Maire \cite{Mai00}, the quadratic field $k$ admits an infinite everywhere unramified extension $k^\dagger$, which is a tower of unramified 2-extensions. Let $K'k$ be the compositum of $K'$ and $k$ over $\mathbb{Q}$ and let $K_{\infty}''=k^\dagger K'k$ be the compositum of $k^\dagger$ and $K'k$ over $k$. Then $K_{\infty}''/K'k$ is unramified (classical, see Proposition B.2.4 page 592 of \cite{BoGu07} for instance). We want to find small algebraic numbers in this infinite extension.

Let $F\subset k^\dagger$ be a finite extension of $k$. By applying Minkowski's convex body Theorem as in the proof of Theorem B.2.14 page 595 of \cite{BoGu07}\footnote{See also \cite{VaWi13} for better bounds in some cases.}, there exists a non-zero algebraic number $\alpha_F$ in $\mathcal{O}_F$ generating $F$ over $\mathbb{Q}$ (this is important) and with logarithmic absolute Weil height less than $\log\vert\Delta_{F/\mathbb{Q}}\vert^{1/[F:\mathbb{Q}]}$. Now $\vert\Delta_{F/\mathbb{Q}}\vert^{1/[F:\mathbb{Q}]}=\vert\Delta_{k/\mathbb{Q}}\vert^{1/2}$ because $F/k$ is unramified and $[k:\mathbb{Q}]=2$, and $\log\vert\Delta_{k/\mathbb{Q}}\vert^{1/2}$ is bounded from above by $\log10^6<14$. Varying $F$ along the tower, we get infinitely many $\alpha_F$ because each $\alpha_F$ is primitive in $F$, hence they are pairwise distincts. We gather all these $\alpha_F$ to define the set $S_M\subset{K''_\infty}$, for $M=14$, and thus get a curve $C$ defined over a finite\footnote{In the end the curve is defined with a finite number of coefficients in $S_M$ and $K'$.} extension $K''\subset{K_{\infty}''}$ unramified over $K'k$ and with $c_{13}(g)=14$. 

Note that $[K'k:\mathbb{Q}]\leq [k:\mathbb{Q}][K':\mathbb{Q}]=2[K':\mathbb{Q}]$, and $[K'k:\mathbb{Q}]=[K'k:K'][K':\mathbb{Q}]$, hence 
\begin{equation}\label{degree}
[K'k:K']\leq 2.
\end{equation}

Here is a picture to help the reader follow the construction.

\begin{center}
\begin{tikzpicture}
   
    \node (A) at (0,0.7) {$k^{\dagger}$};
    \node (B) at (1,1.8) {$K''_{\infty}$};
    \node (C) at (0,-2) {$k$};
    \node (D) at (2,-2) {$K$};
    \node (E) at (1,-2.5) {$\mathbb{Q}$};
    \node (F) at (2,-1) {$K'$};
    \node (G) at (2,0) {$K'k$};
    \node (H) at (0,-1) {$F$};
    \node (I) at (1,0.5) {$K''$};
    \draw (C) -- (E);
    \draw (B) -- (G);
    \draw (D) -- (F);
    \draw (D) -- (E);
    \draw (G) -- (C);
    \draw (G) -- (F);
    \draw (B) -- (A);
    \draw (H) -- (A);
    \draw (H) -- (C);
    \draw (I) -- (B);
    \draw (I) -- (H);
    \draw (I) -- (G);
\end{tikzpicture}
\end{center}


The control on the height of the intersection defining $C$ and on the degree of the intersection is provided by Proposition 2.3 page 765 of \cite{Remond} which gives in our situation, as $\deg A=g!$ and $h(H_i)\leq c_{13}(g) h_{\mathbb{P}^N}(A)$,
\begin{equation}\label{h3}
h_{\mathbb{P}^N}(C)\leq h_{\mathbb{P}^N}(A\cap H_1\cap \cdots \cap H_{g-1})\leq c_{14}(g) (h_{\mathbb{P}^N} (A)+1),
\end{equation}

\noindent where one can take $c_{14}(g)=g(g!)+14$.
We also need to control the genus $g_0$ of the curve $C$. Using calculations on the successive Hilbert polynomials of $A\cap H_1\cap...\cap H_i$, one can take the explicit bound $g_0\leq 6^g g! g^2 =c_{12}(g)$, see \cite{CaTa} for the details\footnote{Let us also remark that one can embed the curve in $\mathbb{P}^3$, then using a theorem of Castelnuovo for curves in $\mathbb{P}^3$ given in Theorem 6.4 page 351 of \cite{Hart}, one has $g_0\leq \deg_{\mathbb{P}^3}(C)^2$. }.
\\

\emph{Step 5}. The conjunction of (\ref{h2}), (\ref{h3}), (\ref{h1}) and the fact that $g_0\leq c_{12}(g)$ imply that over the finite extension $K''/K$ there exists a curve $C$ on $A$ such that 
\begin{equation}\label{JC}
h_{\Theta}(J_C,L_\Theta)\leq c_{15}(g) (h_{\Theta}(A,L)+1)
\end{equation}

\noindent where one can take $c_{15}(g)=(12g)^{12g^{12g^{2g}}}$, and by the universal property of the jacobian, from the inclusion of $C$ in $A$ one has a homomorphism $J_C\to A$. Let us show that it is surjective (we follow the classical arguments given for instance in Proposition 6.1 page 104 and Theorem 10.1 page 116-117 of \cite{Mil}). Let $A_1$ be the image of the map, it is an abelian variety. Suppose $A_1\neq A$, we will derive a contradiction. There exists another non-zero abelian subvariety $A_2$ in $A$ such that $A_1\times A_2\to A$ is an isogeny. In particular, $A_1\cap A_2$ is finite, so $C\cap A_2$ is finite because $C$ is in $A_1$, and non-empty because one can always assume $O\in{C}$. Let $W=A_1\times A_2$. It is in particular a non-singular projective variety. Let $\pi$ be the composition of $Id\times [2]:A_1\times A_2\to A_1\times A_2$ with $A_1\times A_2\to A$. Then if $p_2$ denotes the projection on the second factor, $p_2(\pi^{-1}(C))=[2]^{-1}(C\cap A_2)$, so $\pi^{-1}(C)$ is not geometrically connected. But it must be by Corollary 7.9 page 244 of \cite{Hart} (or by lemma 10.3 of \cite{Mil}). This is the desired contradiction, hence $A_1=A$.

This implies that there exists an abelian variety $B$ such that $J_C$ is isogenous to $A\times B$.  Isogenous abelian varieties share the same bad reduction primes by the N\'eron-Ogg-Shafarevich criterion, because they have the same Tate modules (see Theorem 1 page 493 of \cite{SeTa} and Corollary 2 page 22 of \cite{Falt2}). Thus, if we denote $d''=[K'':\mathbb{Q}]$, we get that 
\begin{equation}\label{JCA}
 \frac{1}{d''}\sum_{\mathfrak{p}''\, \mathrm{bad}\, \mathrm{for}\, A} \delta_{\mathfrak{p}''}(C)\log\mathcal{N}_{K''/\mathbb{Q}}(\mathfrak{p}'') \leq \frac{1}{d''}\sum_{\mathfrak{p}''\,\subset\,O_{K''}} \delta_{\mathfrak{p}''}(C)\log\mathcal{N}_{K''/\mathbb{Q}}(\mathfrak{p}'').
\end{equation}

\emph{Step 6}. Let us show that we have reduced the proof to the case of jacobians of curves. Following the previous steps we get
$$ \frac{1}{d''}\sum_{\mathfrak{p}''\,\subset\,O_{K''}} \delta_{\mathfrak{p}''}(C)\log\mathcal{N}_{K''/\mathbb{Q}}(\mathfrak{p})\underset{(i)}{\leq} \hFplus(J_C/K'')\underset{(ii)}{\ll} h_{\Theta}(J_C,L_\Theta)\underset{(iii)}{\ll} h_{\Theta}(A,L)\underset{(iv)}{\ll} \hFplus(A/K''),$$ where the implied constants depend only on $g$ and the successive inequalities are


\begin{itemize}
\item (i) is the case of curves given by inequality (\ref{curves}),
\item (ii) is the comparison between the theta height and the Faltings height of \cite{Paz3} as recalled in (\ref{thetafaltings}),
\item (iii) is inequality (\ref{JC}),
\item (iv) is again (\ref{thetafaltings}).
\end{itemize}

\noindent If the curve $C$ was defined over $K$, we could use on the far left side the invariance property (\ref{discri}). In the general case we have nevertheless inequality (\ref{JCA}), and we get from there
 \begin{equation}\label{ramifiii}
\frac{1}{d''}\sum_{\substack{\mathfrak{p}''\subset\mathcal{O}_{K''}\\ \mathrm{bad}\, \mathrm{for}\, A}} \delta_{\mathfrak{p}''}(C)\log\mathcal{N}_{K''/\mathbb{Q}}(\mathfrak{p}'')\geq  \frac{1}{d}\sum_{\substack{\mathfrak{p}\subset{\mathcal{O}_{K}}\\ \mathrm{bad}\, \mathrm{for}\, A}} \Big(\sum_{\mathfrak{p}''\vert \mathfrak{p}} \frac{ f_{\mathfrak{p}''/\mathfrak{p}}}{[K'':K]}\Big)\log\mathcal{N}_{K/\mathbb{Q}}(\mathfrak{p})
 \end{equation}
\noindent where $f_{\mathfrak{p}''/\mathfrak{p}}$ is the residual degree. Indeed, if $\mathfrak{p}''$ is a bad prime for $A$, it is also a bad prime for $\Jac(C)$, hence also a bad prime for $C$ (the converse statement is wrong), hence $\delta_{\mathfrak{p}''}(C)\geq 1$. Using $$[K'':K]=\sum_{\mathfrak{p}''\vert \mathfrak{p}}e_{\mathfrak{p}''/\mathfrak{p}}f_{\mathfrak{p}''/\mathfrak{p}}\leq \Big(\max_{\mathfrak{p}''\vert \mathfrak{p}} e_{\mathfrak{p}''/\mathfrak{p}}\Big)\sum_{\mathfrak{p}''\vert \mathfrak{p}}f_{\mathfrak{p}''/\mathfrak{p}},$$ one gets in (\ref{ramifiii})
$$ \frac{1}{d''}\sum_{\mathfrak{p}''\, \mathrm{bad}\, \mathrm{for}\, A} \delta_{\mathfrak{p}''}(C)\log\mathcal{N}_{K''/\mathbb{Q}}(\mathfrak{p}'')\geq \frac{1}{d}\sum_{\mathfrak{p}\, \mathrm{bad}\, \mathrm{for}\, A} \frac{1}{\displaystyle{\max_{\mathfrak{p}''\vert \mathfrak{p}}} e_{\mathfrak{p}''/\mathfrak{p}}}\log\mathcal{N}_{K/\mathbb{Q}}(\mathfrak{p})\geq \frac{1}{2\cdot16^{4g^2} d}\log N^0_{A/K}$$
where the last inequality holds because the ramification index is controlled by $e_{\mathfrak{p}''/\mathfrak{p}}\leq [K'k:K'][K':K]\leq 2\cdot 16^{4g^2}$ as $K''/K'k$ is unramified and as one has (\ref{deg16}) and (\ref{degree}).

One concludes by $\hFplus(A/K'')=\hFplus(A/K)$ on the far right side because $A/K$ is already semi-stable. At each and every step an explicit constant is provided, an easy calculation leads to $c_5=(12g)^{-12g^{12g^{3g}}}$ and $c_6=-1/c_5$ for the general case. These values are not expected to be optimal. 
\end{proof}

\subsection{Reducing to the semi-stable case}

We explain in this section how to use base change properties to derive the general case from the semi-stable case. Let us start by the following definition.

\begin{defin}
Let $A$ be an abelian variety defined over a discrete valuation field $K_\mathfrak{p}$ and let $K_{\mathfrak{p}'}'$ be a finite extension of $K_\mathfrak{p}$ where $A$ has semi-stable reduction, with ramification index $e_{\mathfrak{p}'/\mathfrak{p}}$, where $\mathfrak{p}'$ is a prime above $\mathfrak{p}$, and $\omega_{A/K_{\mathfrak{p}}}$ the determinant of differentials. Let $h_{\mathfrak{p}}: \mathcal{A}_{\mathcal{O}_{K_\mathfrak{p}}}\times_{\mathcal{O}_{K_{\mathfrak{p}}}}\mathcal{O}_{K'_{\mathfrak{p}'}}\to \mathcal{A}_{\mathcal{O}_{K'_{\mathfrak{p}'}}}$ be the canonical base change morphism. Let $\mathrm{Lie}(h_\mathfrak{p})$ be the induced injective morphism on differentials.
Let $$c(A,\mathfrak{p},\mathfrak{p}')=\frac{1}{e_{\mathfrak{p}'/\mathfrak{p}}}\mathrm{length}_{\mathcal{O}_{K_{\mathfrak{p}'}'}}\mathrm{coker(Lie(h_{\mathfrak{p}}))}$$
be the base change conductor, where if $\Gamma(.,.)$ stands for global sections one has
$$\mathrm{coker(Lie(h_{\mathfrak{p}}))}=\frac{\Gamma(\Spec(\mathcal{O}_{K_{\mathfrak{p}}}),\omega_{A/K_{\mathfrak{p}}})\otimes \mathcal{O}_{K_{\mathfrak{p}'}'}}{\Gamma(\Spec(\mathcal{O}_{K_{\mathfrak{p}'}'}), \omega_{A/K_{\mathfrak{p}'}'})}.$$ 
\end{defin}

This conductor was defined by Chai in \cite{Cha}, see also the reference \cite{HaNi} pages 90-98. It verifies in particular the two following key properties.
\begin{prop}\label{c=0}
Let $A$ be an abelian variety defined over a discrete valuation field $K_\mathfrak{p}$ and let $K_{\mathfrak{p}'}'$ be a finite extension of $K_\mathfrak{p}$ where $A$ has semi-stable reduction with base change conductor $c(A,\mathfrak{p},\mathfrak{p}')$. Then one has
\begin{enumerate}
\item $c(A,\mathfrak{p},\mathfrak{p}')=0$ if and only if $A/K_{\mathfrak{p}}$ has semi-stable reduction,
\item if $A$ is not semi-stable at $\mathfrak{p}$, then $c(A,\mathfrak{p},\mathfrak{p}')\geq 1/e_{\mathfrak{p}'/\mathfrak{p}}$.
\end{enumerate}
\end{prop}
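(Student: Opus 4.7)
The plan is to interpret both statements as reflecting the fact that $c(A,\mathfrak{p},\mathfrak{p}')$ measures precisely the defect in the base change property of the N\'eron model, which is the obstruction to semi-stability at $\mathfrak{p}$. I would treat (1) and (2) in sequence, with the bulk of the work in the ``$\Leftarrow$'' direction of (1); the lower bound (2) is then an elementary length computation.

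For the forward direction of (1), I would invoke the classical fact (SGA~7, Expos\'e IX, or Bosch--L\"utkebohmert--Raynaud, \emph{N\'eron Models}, \S 7.4--7.5) that the formation of the N\'eron model of a semi-abelian variety commutes with any base change of discrete valuation rings. If $A/K_{\mathfrak{p}}$ is semi-stable, this says $h_{\mathfrak{p}}$ is itself an isomorphism, so $\mathrm{Lie}(h_{\mathfrak{p}})$ is an isomorphism of free $\mathcal{O}_{K'_{\mathfrak{p}'}}$-modules of rank $g$, its cokernel vanishes, and $c(A,\mathfrak{p},\mathfrak{p}')=0$. For the converse, I would argue as follows: if the cokernel vanishes, then since $\mathrm{Lie}(h_{\mathfrak{p}})$ is an injection between free modules of the same rank $g$, it must already be an isomorphism; by Nakayama this forces $h_{\mathfrak{p}}$ to be \'etale along the zero section of the special fibre. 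Since $h_{\mathfrak{p}}$ is an isomorphism on generic fibres and induces an isomorphism of Lie algebras at the origin, translating by the group law shows that the induced map of identity components of special fibres is also \'etale; being a monomorphism of smooth connected commutative group schemes of the same dimension $g$, it is an isomorphism. Because the special fibre of $\mathcal{A}_{\mathcal{O}_{K'_{\mathfrak{p}'}}}$ has semi-abelian identity component by hypothesis of semi-stability over $K'_{\mathfrak{p}'}$, we conclude that the identity component over $\mathfrak{p}$ is also semi-abelian, i.e.\ $A/K_{\mathfrak{p}}$ was already semi-stable.

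For part (2), assume $A$ is not semi-stable at $\mathfrak{p}$; then part (1) shows $\mathrm{coker}(\mathrm{Lie}(h_{\mathfrak{p}}))\neq 0$. Since $\mathrm{Lie}(h_{\mathfrak{p}})$ is an injection between two free $\mathcal{O}_{K'_{\mathfrak{p}'}}$-modules of the same rank $g$, its cokernel is a non-zero finitely generated torsion module over the discrete valuation ring $\mathcal{O}_{K'_{\mathfrak{p}'}}$, hence has length at least $1$. Dividing by $e_{\mathfrak{p}'/\mathfrak{p}}$ according to the definition yields the inequality $c(A,\mathfrak{p},\mathfrak{p}')\geq 1/e_{\mathfrak{p}'/\mathfrak{p}}$.

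The main obstacle is the converse of (1): bridging the Lie-algebraic equality to an actual equality of identity components of special fibres, and then invoking the characterisation of semi-stable reduction. This step relies on the standard structure theory of N\'eron models over DVRs (smoothness of the identity component, faithfulness of the Lie functor on smooth connected groups of the same dimension) and should be quoted from the references above rather than reproved.
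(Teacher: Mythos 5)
Your proof of part (2) and of the forward direction of (1) matches the paper's argument (the paper quotes SGA 7.1, Corollaire 3.3, exactly for this). One caveat in the forward direction: what commutes with base change when $A/K_{\mathfrak{p}}$ is semi-stable is the identity component $\mathcal{A}^0$ (the semi-abelian scheme), not the full N\'eron model --- component groups can grow under ramified extension --- so the claim that ``$h_{\mathfrak{p}}$ is itself an isomorphism'' is too strong; since the Lie algebra only sees the identity component, your conclusion that the cokernel vanishes is nevertheless unaffected. For the converse of (1) you take a genuinely different route from the paper: the paper stays with the morphism $\Phi\colon \mathcal{A}^0_{\mathcal{O}_{K_\mathfrak{p}}}\otimes\mathcal{O}_{K'_{\mathfrak{p}'}}\to\mathcal{A}^0_{\mathcal{O}_{K'_{\mathfrak{p}'}}}$ over the valuation ring, notes it is birational (isomorphism on generic fibres) and quasi-finite (finite kernel on the special fibre because the Lie algebras agree), and invokes Zariski's Main Theorem together with normality of the target to conclude that $\Phi$ is an isomorphism, hence that $A/K_{\mathfrak{p}}$ is semi-abelian; you instead argue purely on the special fibres via \'etaleness. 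Your route is shorter on the scheme-theoretic side but puts all the weight on the fibral group theory.

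There is, however, one unjustified step in your converse: ``being a monomorphism of smooth connected commutative group schemes of the same dimension $g$, it is an isomorphism.'' Nothing you have established makes the special-fibre homomorphism a monomorphism; the vanishing of the cokernel of $\mathrm{Lie}(h_{\mathfrak{p}})$ only gives \'etaleness at the origin (hence everywhere, by translation), and an \'etale isogeny with nontrivial finite kernel is not excluded at this point --- injectivity on the generic fibre does not propagate to the special fibre, and the fact that the map is injective there is really part of the conclusion, not a hypothesis. Fortunately you do not need the isomorphism: an \'etale homomorphism between connected group schemes of the same dimension is surjective with finite kernel, i.e.\ an isogeny, and an isogeny onto a semi-abelian group forces the source to be semi-abelian (the unipotent part of the Chevalley decomposition of the source maps to a unipotent subgroup of the semi-abelian target, which is trivial, so it lies in the finite kernel and, being connected, vanishes). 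With that substitution --- or by reverting to the paper's Zariski Main Theorem argument, which yields the stronger statement that $\Phi$ itself is an isomorphism --- your proof of the converse is complete; parts (2) and the forward direction need no change beyond the identity-component caveat above.
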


\begin{proof}
The proof goes along the same lines as Proposition 4.3 of \cite{Paz14} which deals with elliptic curves. As it is relatively short, we give it here for abelian varieties. Let us start by assuming that $A/K_{\mathfrak{p}}$ has semi-stable reduction. Denote by $\mathcal{A}^0_{\mathcal{O}_{K_\mathfrak{p}}}$ the identity component of the N\'eron model of $A$ over $K_{\mathfrak{p}}$, one then has $\mathcal{A}^0_{\mathcal{O}_{K_\mathfrak{p}}}\otimes\mathcal{O}_{K_{\mathfrak{p}'}'}\simeq \mathcal{A}^0_{\mathcal{O}_{K_{\mathfrak{p}'}'}}$ by Corollaire 3.3 page 348 of SGA 7.1 \cite{SGA}, hence the differentials are the same, so $c(A,\mathfrak{p},\mathfrak{p}')=0$.

Reciprocally, one still has a map $\Phi: \mathcal{A}^0_{\mathcal{O}_{K_\mathfrak{p}}}\otimes\mathcal{O}_{K_{\mathfrak{p}'}'} \to\mathcal{A}^0_{\mathcal{O}_{K_{\mathfrak{p}'}'}}$. As $c(A,\mathfrak{p},\mathfrak{p}')=0$, the Lie algebras are the same and as $\Phi$ is an isomorphism on the generic fibers, $\Phi$ is birational. On the special fiber, $\Phi$ has finite kernel and is thus surjective because the dimensions are equal, here again because $c(A,\mathfrak{p},\mathfrak{p}')=0$.

We have that $\Phi$ is quasi-finite and birational. As $\mathcal{A}_{\mathcal{O}_{K_{\mathfrak{p}'}'}}$ is normal, by Zariski's Main Theorem found in Corollary 4.6 page 152 of \cite{Liu} for instance, $\Phi$ is an open immersion. So $\Phi$ is surjective and is also an open immersion, hence an isomorphism. This implies that $A/K_{\mathfrak{p}}$ is semi-abelian, and proves part (1). Part (2) is easier, if $A$ is not semi-stable then the length in the definition of $c(A,\mathfrak{p},\mathfrak{p}')$ is a positive integer, hence bigger than $1$.
\end{proof}

We need a lemma.
\begin{lem}
Let $Uns$ denote the set of unstable primes of $A$ over $K$. Let $K'$ be a number field extension of $K$ over which $A$ has semi-stable reduction everywhere. Then one has
\begin{equation}\label{base change conductor}
\hFplus(A/K)-\hFplus(A/K') \geq \frac{1}{[K':\mathbb{Q}]}\sum_{\mathfrak{p}\in{Uns}}\log \mathcal{N}_{K/\mathbb{Q}}(\mathfrak{p}). 
\end{equation}
\end{lem}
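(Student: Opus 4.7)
The plan is to translate everything into Arakelov degrees on $\Spec(\mathcal{O}_K)$ and $\Spec(\mathcal{O}_{K'})$, identify the difference of heights as a sum of local contributions controlled by the base change conductor, and finally apply part (2) of Proposition \ref{c=0} at every unstable prime. The starting observation is that if $\overline{\omega}_{\mathcal{A}/\mathcal{O}_K}\otimes\mathcal{O}_{K'}$ denotes the hermitian line bundle on $\Spec(\mathcal{O}_{K'})$ obtained by pulling back $\overline{\omega}_{\mathcal{A}/\mathcal{O}_K}$ (the metric at an archimedean place $\sigma'$ of $K'$ being induced by the metric at the archimedean place $\sigma=\sigma'_{|K}$, since $\mathcal{A}_{\sigma'}(\mathbb{C})=\mathcal{A}_{\sigma}(\mathbb{C})$), then
\[
\widehat{\deg}(\overline{\omega}_{\mathcal{A}/\mathcal{O}_K}\otimes\mathcal{O}_{K'})=[K':K]\,\widehat{\deg}(\overline{\omega}_{\mathcal{A}/\mathcal{O}_K})=[K':\mathbb{Q}]\,\hFplus(A/K),
\]
which reduces the statement to comparing $\widehat{\deg}(\overline{\omega}_{\mathcal{A}/\mathcal{O}_K}\otimes\mathcal{O}_{K'})$ with $\widehat{\deg}(\overline{\omega}_{\mathcal{A}'/\mathcal{O}_{K'}})=[K':\mathbb{Q}]\hFplus(A/K')$ on the same scheme $\Spec(\mathcal{O}_{K'})$.

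Second, I would use that at every archimedean place of $K'$ the two bundles are isometric (they have the same generic fiber and the metric depends only on the complex analytic datum), so the difference of Arakelov degrees reduces to a sum of contributions from finite primes. Locally at a prime $\mathfrak{p}'\mid\mathfrak{p}$, the canonical base change morphism $h_\mathfrak{p}$ embeds $\Gamma(\omega_{\mathcal{A}'/\mathcal{O}_{K'_{\mathfrak{p}'}}})$ into $\Gamma(\omega_{\mathcal{A}/\mathcal{O}_{K_{\mathfrak{p}}}})\otimes\mathcal{O}_{K'_{\mathfrak{p}'}}$, with quotient of length exactly $e_{\mathfrak{p}'/\mathfrak{p}}\,c(A,\mathfrak{p},\mathfrak{p}')$ by the very definition of the base change conductor. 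Summing these local lengths weighted by $\log\mathcal{N}_{K'/\mathbb{Q}}(\mathfrak{p}')$ yields the identity
\[
[K':\mathbb{Q}]\bigl(\hFplus(A/K)-\hFplus(A/K')\bigr)=\sum_{\mathfrak{p}}\sum_{\mathfrak{p}'\mid\mathfrak{p}} e_{\mathfrak{p}'/\mathfrak{p}}\,c(A,\mathfrak{p},\mathfrak{p}')\,\log\mathcal{N}_{K'/\mathbb{Q}}(\mathfrak{p}').
\]

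Third, I would discard the summands indexed by primes of semi-stable reduction, since by Proposition \ref{c=0}(1) these contribute $0$ and in any case the remaining terms are non-negative. For $\mathfrak{p}\in Uns$, Proposition \ref{c=0}(2) gives $c(A,\mathfrak{p},\mathfrak{p}')\geq 1/e_{\mathfrak{p}'/\mathfrak{p}}$, so each $e_{\mathfrak{p}'/\mathfrak{p}}c(A,\mathfrak{p},\mathfrak{p}')\geq 1$. Writing $\log\mathcal{N}_{K'/\mathbb{Q}}(\mathfrak{p}')=f_{\mathfrak{p}'/\mathfrak{p}}\log\mathcal{N}_{K/\mathbb{Q}}(\mathfrak{p})$ and using $\sum_{\mathfrak{p}'\mid\mathfrak{p}}f_{\mathfrak{p}'/\mathfrak{p}}\geq 1$, the previous identity yields
\[
[K':\mathbb{Q}]\bigl(\hFplus(A/K)-\hFplus(A/K')\bigr)\geq\sum_{\mathfrak{p}\in Uns}\log\mathcal{N}_{K/\mathbb{Q}}(\mathfrak{p}),
\]
which is the claim after dividing by $[K':\mathbb{Q}]$.

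The main point to treat carefully, and essentially the only non-formal step, is the second one: establishing that the difference of Arakelov degrees on $\Spec(\mathcal{O}_{K'})$ is precisely the signed sum of lengths of the cokernels of $\Lie(h_\mathfrak{p})$. This is a standard consequence of the fact that the Lie algebra morphism is injective with finite cokernel at each finite prime, so that choosing a common non-zero rational section of $\det\omega$ and computing its $\mathfrak{p}'$-adic valuation in both integral structures differs by exactly the length of the cokernel; everything else (archimedean cancellation, conversion of norms, and the final estimate via Proposition \ref{c=0}) is bookkeeping.
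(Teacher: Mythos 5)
Your proposal is correct and follows essentially the same route as the paper: compare $\widehat{\degr}$ of $\phi^*\overline{\omega}_{\mathcal{A}/\mathcal{O}_K}$ and $\overline{\omega}_{\mathcal{A}'/\mathcal{O}_{K'}}$ on $\Spec(\mathcal{O}_{K'})$, note the archimedean contributions cancel, identify the finite-place discrepancy with the lengths $e_{\mathfrak{p}'/\mathfrak{p}}c(A,\mathfrak{p},\mathfrak{p}')$ of the cokernels of the base change morphism, and bound these below by $1$ at unstable primes via Proposition \ref{c=0}(2) before converting norms with $\sum_{\mathfrak{p}'\mid\mathfrak{p}}f_{\mathfrak{p}'/\mathfrak{p}}\geq 1$. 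Your presentation, which records the exact identity before estimating, is in fact a slightly cleaner bookkeeping of the same argument.
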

\begin{proof}
For a field $F$, we denote by $\mathcal{A}_{\mathcal{O}_F}$ the N\'eron model of $A$ over the base $\Spec{{\mathcal{O}_F}}$.
As $K'$ is a finite extension of $K$, we have 
\begin{equation}\label{chaichai}
\hFplus(A/K)-\hFplus(A/K')=\frac{1}{[K:\mathbb{Q}]}\deg(\omega_{\mathcal{A}_{\mathcal{O}_K}})-\frac{1}{[K':\mathbb{Q}]}\deg({\omega_{\mathcal{A}_{\mathcal{O}_{K'}}}}),
\end{equation}
\textit{i.e.} the archimedean parts cancel out.
Let $\phi:K\to K'$ be the inclusion, we have a morphism $\omega_{\mathcal{A}_{\mathcal{O}_{K'}}}\to \phi^* \omega_{\mathcal{A}_{\mathcal{O}_{K}}}$, taking degrees (see also the proof of Lemme 1.23 page 35 of \cite{Del}) one obtains $$[K':K]\deg(\omega_{\mathcal{A}_{\mathcal{O}_{K}}})=\deg(\phi^*\omega_{\mathcal{A}_{\mathcal{O}_{K}}})=\deg(\omega_{\mathcal{A}_{\mathcal{O}_{K'}}})+\sum_{\mathfrak{p}\subset\mathcal{O}_K}\sum_{\mathfrak{p}'\vert\mathfrak{p}}\mathrm{length}_{\mathcal{O}_{K'_{\mathfrak{p}'}}}(\mathrm{coker\phi}) \log\mathcal{N}_{K'/\mathbb{Q}}(\mathfrak{p}'),$$
hence using Proposition \ref{c=0} we obtain
$$[K':K]\deg(\omega_{\mathcal{A}_{\mathcal{O}_{K}}})\geq\deg(\omega_{\mathcal{A}_{\mathcal{O}_{K'}}})+\sum_{\mathfrak{p}\in{Uns}}\sum_{\mathfrak{p}'\vert\mathfrak{p}}\frac{[K':K]}{e_{\mathfrak{p}'/\mathfrak{p}}} \log\mathcal{N}_{K/\mathbb{Q}}(\mathfrak{p}),$$
hence dividing by $[K':\mathbb{Q}]=[K':K][K:\mathbb{Q}]$, and using $e_{\mathfrak{p}'/\mathfrak{p}}\leq [K':K]$
$$\frac{1}{[K:\mathbb{Q}]}\deg(\omega_{\mathcal{A}_{\mathcal{O}_{K}}})\geq\frac{1}{[K':\mathbb{Q}]}\deg(\omega_{\mathcal{A}_{\mathcal{O}_{K'}}})+\frac{1}{[K':\mathbb{Q}]}\sum_{\mathfrak{p}\in{Uns}} \log\mathcal{N}_{K/\mathbb{Q}}(\mathfrak{p}),$$
which gives the result by using (\ref{chaichai}).
\end{proof}

We are now ready to perform a base change on the inequality of Proposition \ref{semi stable height conductor}, which will be the completion of the proof of Theorem \ref{height conductor intro}. 

\begin{prop}\label{height conductor}(Final step in the proof of Theorem \ref{height conductor intro}.)
Let $g\geq 1$ be an integer and $K$ a number field of degree $d$. There exists $c_{16}(g)>0$ and $c_{17}(g)\in{\mathbb{R}}$ such that for any abelian variety (not necessarily semi-stable) $A$ defined over $K$, with dimension $g$, one has $$\hFplus(A/K) \geq c_{16} \frac{1}{d}\log N^{0}_{A/K}+c_{17},$$ and one can take $c_{16}=c_5/12^{4g^2}$ and $c_{17}=c_6$. If $A$ is the jacobian of a curve, one can take $c_{16}=1/12^{4g^2+1}$ and $c_{17}=0$.
\end{prop}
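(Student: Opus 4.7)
The plan is to pass to an extension $K'/K$ over which $A$ acquires semi-stable reduction, apply Proposition \ref{semi stable height conductor} there, and then recover the contribution of the unstable primes of $A/K$ via the base change inequality (\ref{base change conductor}). I would take $K'=K(A[12])$. By Grothendieck's semi-stable reduction theorem, trivializing $A[3]$ (resp.\ $A[4]$) forces semi-stable reduction at any prime not above $3$ (resp.\ not above $2$), so $A/K'$ is semi-stable everywhere. Since $\Gal(K'/K)$ embeds into $\mathrm{GL}_{2g}(\mathbb{Z}/12\mathbb{Z})$ the degree satisfies $[K':K]\leq 12^{4g^2}$, exactly as in (\ref{deg16}). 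Write $d''=[K':\mathbb{Q}]=d[K':K]$.

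Split the bad primes of $A/K$ as $Uns\sqcup B$, where $B$ is the complementary set of primes at which $A$ is bad but already semi-stable. For $\mathfrak{p}\in B$, every prime $\mathfrak{p}'$ of $K'$ above $\mathfrak{p}$ is still a prime of bad (semi-stable) reduction of $A/K'$. Using $\sum_{\mathfrak{p}'\mid\mathfrak{p}}e_{\mathfrak{p}'/\mathfrak{p}}f_{\mathfrak{p}'/\mathfrak{p}}=[K':K]$ together with $e_{\mathfrak{p}'/\mathfrak{p}}\leq [K':K]\leq 12^{4g^2}$, the contribution of $B$ to $\frac{1}{d''}\log N^0_{A/K'}$ is bounded below by $\frac{1}{d\cdot 12^{4g^2}}\sum_{\mathfrak{p}\in B}\log\mathcal{N}_{K/\mathbb{Q}}(\mathfrak{p})$, so Proposition \ref{semi stable height conductor} applied over $K'$ yields
\[
\hFplus(A/K')\;\geq\; \frac{c_5}{d\cdot 12^{4g^2}}\sum_{\mathfrak{p}\in B}\log\mathcal{N}_{K/\mathbb{Q}}(\mathfrak{p})+c_6.
\]
The base change inequality (\ref{base change conductor}) applied to $K'/K$ simultaneously supplies
\[
\hFplus(A/K)-\hFplus(A/K')\;\geq\;\frac{1}{d''}\sum_{\mathfrak{p}\in Uns}\log\mathcal{N}_{K/\mathbb{Q}}(\mathfrak{p})\;\geq\;\frac{1}{d\cdot 12^{4g^2}}\sum_{\mathfrak{p}\in Uns}\log\mathcal{N}_{K/\mathbb{Q}}(\mathfrak{p}).
\]

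Adding the two displays and using $c_5\leq 1$ to downgrade the coefficient in front of the $Uns$-sum to $c_5/(d\cdot 12^{4g^2})$ as well, the two sums merge into the full sum over bad primes of $A/K$, and we obtain $\hFplus(A/K)\geq \frac{c_5}{d\cdot 12^{4g^2}}\log N^0_{A/K}+c_6$, i.e.\ the claim with $c_{16}=c_5/12^{4g^2}$ and $c_{17}=c_6$. In the jacobian case, $A\otimes_K K'$ is again a jacobian (of $C\otimes_K K'$), so Proposition \ref{semi stable height conductor} furnishes the sharper constants $c_5=1/12$, $c_6=0$, producing $c_{16}=1/12^{4g^2+1}$ and $c_{17}=0$. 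The only real bookkeeping hurdle is to reindex both contributions (one coming naturally at the level of $K'$ via the semi-stable Proposition, the other at the level of $K$ via the base change inequality) by primes of $K$ under the same normalization $1/(d\cdot 12^{4g^2})$; the multiplicativity identity $\sum_{\mathfrak{p}'\mid\mathfrak{p}} e_{\mathfrak{p}'/\mathfrak{p}}f_{\mathfrak{p}'/\mathfrak{p}}=[K':K]$ combined with the trivial bound $e_{\mathfrak{p}'/\mathfrak{p}}\leq 12^{4g^2}$ is precisely what absorbs this reindexing at the uniform cost of the factor $12^{4g^2}$.
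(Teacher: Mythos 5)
Your proof is correct and follows essentially the same route as the paper: pass to $K'=K(A[12])$ (Silverberg--Zarhin/Raynaud gives semi-stability, Gaudron--R\'emond gives $[K':K]\leq 12^{4g^2}$), apply Proposition \ref{semi stable height conductor} over $K'$, recover the unstable primes via (\ref{base change conductor}), and reindex by primes of $K$ using $\sum_{\mathfrak{p}'\mid\mathfrak{p}}e_{\mathfrak{p}'/\mathfrak{p}}f_{\mathfrak{p}'/\mathfrak{p}}=[K':K]$ and $e_{\mathfrak{p}'/\mathfrak{p}}\leq 12^{4g^2}$, yielding the same constants $c_{16}=c_5/12^{4g^2}$, $c_{17}=c_6$ (and $1/12^{4g^2+1}$, $0$ in the jacobian case). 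The only cosmetic difference is that you merge the two contributions using $c_5\leq 1$ where the paper takes a minimum of the two coefficients.
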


\begin{proof}
 Let $N^{st}_{A/K}$ be the product of the norms of primes where $A$ has semi-stable bad reduction. Let $N^{uns}_{A/K}$ be the product of the norms of primes where $A$ has unstable bad reduction. By definition one has $N^0_{A/K}=N^{st}_{A/K}N^{uns}_{A/K}$. Let $K'$ be a number field extension of $K$ such that $A$ acquires semi-stable reduction everywhere over $K'$. Using equality (\ref{base change conductor}), one gets $$\hFplus(A/K)\geq \hFplus(A/K') +\frac{1}{[K':\mathbb{Q}]}\log N^{uns}_{A/K}.$$ As $A/K'$ has semi-stable reduction everywhere, one obtains by Proposition \ref{semi stable height conductor} that $$\hFplus(A/K')\geq c_{5}(g)\frac{1}{d'} \log N^{st}_{A/K'}+c_{6}(g).$$ Recall (use Theorem 6.2 page 413 of \cite{SiZa95}) that one may choose the explicit extension $K'=K[A[12]]$, hence the degree $d'=[K':\mathbb{Q}]$ is controlled by the degree $d=[K:\mathbb{Q}]$ and by the dimension of $A$; for instance, apply Lemma 4.7 page 2078 of \cite{GaRe2} to obtain $d'=[K[A[12]]: K]\leq 12^{4g^2}$. Now one has $$\frac{1}{d'} \log N^{st}_{A/K'}=\frac{1}{d'}\sum_{\mathfrak{p}'\subset\mathcal{O}_{K'}}\log\mathcal{N}_{K'/\mathbb{Q}}(\mathfrak{p}')\geq\frac{1}{d}\sum_{\mathfrak{p}\subset\mathcal{O}_K}\frac{1}{\displaystyle{\max_{\mathfrak{p}'\vert\mathfrak{p}}e_{\mathfrak{p}'/\mathfrak{p}}}}\log\mathcal{N}_{K/\mathbb{Q}}(\mathfrak{p})\geq\frac{1}{12^{4g^2}d}\log N^{st}_{A/K}$$ 
 because $e_{\mathfrak{p}'/\mathfrak{p}}\leq [K':K]$ and so gathering the estimates we obtain $$\hFplus(A/K)\geq c_{18}\frac{1}{d}\log N^{st}_{A/K}+ c_{19}\frac{1}{d}\log N^{uns}_{A/K}+c_{17}\geq \min\{c_{18}, c_{19}\}\frac{1}{d}\log N^{0}_{A/K}+c_{17},$$ where the quantities $c_{17}$, $c_{18}$, $c_{19}$ only depend on $g$.
\end{proof}

Note that for $g=1$, Proposition \ref{height conductor} is an improvement on Proposition 4.4 page 57 of \cite{Paz14}, both in the result and in the presentation: an equality of prime norms is incorrect in \textit{loc. cit.} because of possible ramification of stable primes of $K'/K$, but the proof fortunately led to a weaker inequality in the end, so the result stated in \textit{loc. cit.} still holds, and anyhow the new result given here is better.

We obtain an easy proof of Corollary \ref{height conductor injectivity} as the sum of Theorem \ref{matrix} and Proposition \ref{height conductor}. Apply Proposition \ref{height conductor} to get $\hFplus(A/K) \geq c_{16} \log N^{0}_{A/K}+c_{17}$ and Theorem \ref{matrix} to get $$16 \hFplus(A/K)\geq 16\hFplus(A/\overline{\mathbb{Q}})\geq \frac{1}{d}\sum_{v\in{M_K^{\infty}}}d_v\rho(A_v,L_v)^{-2}  - 39g,$$
then sum these two inequalities. 

We can now derive Corollary \ref{rank height intro}.
\begin{proof} \textit{(of Corollary \ref{rank height intro})} 
We will use as a pivot the quantity $N^{0}_{A/K}$. Applying Theorem 5.1 of \cite{Remond} page 775, there exists quantities $c_{20}=c_{20}(K,g)>0$ and $c_{21}=c_{21}(K,g)\geq0$ such that $m_{K}\leq c_{20}(K,g) \log N^0_{A/K} + c_{21}(K,g)$. The quantities depend on the degree and the discriminant of the base field here. This last inequality doesn't require semi-stability of $A$.
Applying Proposition \ref{height conductor} of the present text one obtains $\log N^0_{A/K}\leq c_{22}(K,g) \max\{\hFplus(A/K),\,1\}$, also valid in general. Use the explicit quantities (valid in general) of Theorem 5.1 of \cite{Remond} page 775, it leads to $m_{K}\leq 4g^3d^22^{8g^2} \log N^0_{A/K} + gd2^{8g^2}(\log\vert \Delta_K\vert + g^2d^2\log16)$, and combine with Proposition \ref{height conductor}. It proves the corollary. In the case of jacobians combine with Proposition \ref{semi stable height conductor} which gives $\hFplus(J_C/K)\geq \frac{1}{12d}\log N^0_{J_C/K}$ in the semi-stable case and Proposition \ref{height conductor} which gives $\hFplus(J_C/K)\geq \frac{1}{12d12^{4g^2}}\log N^0_{J_C/K}$ for the general case.
\end{proof}

\section{Lang-Silverman conjecture and regulators}\label{section LS}

We give here a conjecture of Lang and Silverman (\cite{Sil3} page 396\footnote{This first version of the conjecture is known to be wrong, consider for instance the point $(P,0)$ on a variety $A_1\times A_2$ where $P$ is non-torsion and let the height of $A_2$ tend to infinity. However the philosophy of the conjecture is clearly the same as the original statement, a generic point can't have too small height.} or \cite{Paz3}\footnote{This stronger version is also known to be wrong, see Remark \ref{nPP} and section \ref{section LS strong} of the present article for a clarification.}). Throughout this section, we will use the notation $\overline{\End(A)\cdot P}=A$ to say that the set $\End(A)\cdot P$ is Zariski dense in $A$.

\begin{conj}({Lang-Silverman})\label{LangSilV1}
Let $g\geq 1$ be an integer. For any number field $K$, there exists a positive quantity $c_{23}=c_{23}(K,g)$ such that for any abelian variety $A/K$ of dimension $g$ and any ample symmetric line bundle $L$ on $A$, for any point $P\in{A(K)}$, one has:
\[
\Big(\overline{\mathrm{End}(A)\!\cdot\! P}=A \Big)\Rightarrow \;\Big(\widehat{h}_{A,L}(P) \geq c_{23}\, \max\Big\{\hFplus(A/K),\,1\Big\}\Big)\;,
\]
where $\widehat{h}_{A,L}(.)$ is the {N\'eron-Tate} height associated to the line bundle $L$ and $\hFplus(A/K)$ is the (relative) Faltings height of the abelian variety $A/K$.
\end{conj}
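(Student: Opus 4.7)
The statement is the full Lang-Silverman conjecture, which is a deep open problem; any honest proposal must build on the partial cases that are known and sketch how those techniques would have to be pushed. The guiding philosophy is that a point $P$ whose $\End(A)$-orbit is Zariski dense is ``generic'' in $A$ and therefore cannot have canonical height much smaller than the natural arithmetic complexity of $A$, measured by the Faltings height. The inequality is known for elliptic curves (Hindry-Silverman), over function fields, and in the CM case (David, David-Hindry), so my plan is to attempt to combine the transcendence method underlying those proofs with an Arakelov-theoretic small-points argument, keeping the dependence of $c_{23}$ on $K$ only through its degree and discriminant.

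The first step is a reduction. Using Poincar\'e complete reducibility, write $A$ up to isogeny as $\prod A_i^{n_i}$ with the $A_i$ simple and pairwise non-isogenous, decompose $P$ and $L$ accordingly, and observe that Zariski density of $\End(A)\cdot P$ forces a non-torsion projection onto each simple factor. This reduces the conjecture to the case where $A$ is simple and $P$ is non-torsion; one may then replace $P$ by a primitive representative modulo torsion at the cost of a bounded multiplicative constant. It would also be convenient at this stage to pass to Zarhin's trick to put ourselves in the principally polarized setting, as was done in Proposition \ref{semi stable height conductor}, to have access to theta coordinates.

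The core of the argument would proceed by contradiction via transcendence on the universal cover. Assume $\widehat{h}_{A,L}(P) < c\,\hFplus(A/K)$ for a small $c>0$. Lift $P$ at each archimedean $v$ to a vector $u_v$ in the tangent space $T_A(\mathbb{C})$ through the exponential map; the N\'eron-Tate height controls $\Vert u_v\Vert$ in the quotient $T_A/\Gamma_A$, while the Matrix Lemma (Theorem \ref{matrix}) together with the lower bound of Proposition \ref{height conductor} pins down the shape of the period lattice $\Gamma_A$ in terms of $\hFplus(A/K)$. Applying endomorphisms $\varphi\in\End(A)$ of moderate degree produces many small-height translates $\varphi(P)$; the Zariski density hypothesis guarantees that enough of the $\varphi(u_v)$ are linearly independent modulo periods. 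A zero-estimate / interpolation argument on the theta functions of $(A,L)$ in the spirit of Masser-W\"ustholz and Philippon should then either produce an auxiliary section vanishing to high order along the orbit closure, forcing it inside a proper abelian subvariety (contradicting density), or directly yield a contradiction with the comparison between the theta height and the Faltings height used in step (ii) of the proof of Proposition \ref{semi stable height conductor}.

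The main obstacle, and the reason the conjecture is still open, is uniformity of the constant. Transcendence estimates naturally produce constants depending on the analytic complexity of the individual $A$ (periods, injectivity diameter, degree of a minimal polarization), and one must absorb all of these into $\hFplus(A/K)$ itself. In the CM case this is possible because the analytic data are rigid and can be read off the CM type; in the general case, preventing very degenerate period matrices from ruining the zero-estimate count seems to require either a substantial strengthening of the Matrix Lemma or an effective equidistribution statement (Szpiro-Ullmo-Zhang with quantitative dependence on the Faltings height) applied to the orbit closure. Either route would constitute a significant advance, which is why the present paper takes the statement as a conjecture and uses it as input rather than attempting a proof.
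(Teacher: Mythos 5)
The statement you were asked about is Conjecture \ref{LangSilV1}: the paper does not prove it, and never claims to. It is attributed to Lang and Silverman (see \cite{Sil3} and \cite{Paz3}), stated as a hypothesis, and everything downstream (Proposition \ref{reg height thm}, Theorem \ref{reg height thm intro simple}) is explicitly conditional on it. So there is no proof in the paper to compare yours against, and your own text correctly concedes that what you give is a strategy sketch rather than a proof. Judged as a proof attempt it therefore has a genuine gap, indeed the gap: the transcendence/zero-estimate step with a constant $c_{23}(K,g)$ uniform over all $g$-dimensional abelian varieties over $K$ is exactly the open problem, and naming Masser--W\"ustholz and Philippon style interpolation does not close it, since (as you note) those methods produce constants depending on the individual period lattice, injectivity diameter and polarization degree, which cannot at present be absorbed into $\hFplus(A/K)$.

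Two further inaccuracies in the sketch are worth flagging. First, the reduction to the simple case via Poincar\'e reducibility is not cost-free: the isogeny $A \sim \prod A_i^{n_i}$ distorts both $\widehat{h}_{A,L}$ and $\hFplus$ by amounts controlled only through the degree of the isogeny, and bounding that degree in terms of $K$ and $g$ alone requires isogeny estimates (Masser--W\"ustholz, Gaudron--R\'emond) whose constants involve the Faltings height itself, so the reduction threatens circularity rather than giving a bounded multiplicative constant; note also that the conjectured statement deliberately uses the hypothesis $\overline{\End(A)\cdot P}=A$ rather than simplicity precisely to avoid such reductions (see Remark \ref{nPP}). Second, the elliptic-curve case is not known unconditionally: Hindry--Silverman prove it assuming Szpiro's conjecture, which is why the paper remarks that for $g=1$ one may substitute the ABC conjecture for Conjecture \ref{LangSilV1} in Theorems \ref{reg height thm intro simple} and \ref{reg height thm intro}. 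Your honest assessment in the final paragraph is the correct conclusion: the statement remains a conjecture, and the paper uses it as input rather than proving it.
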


\begin{rem}\label{nPP}
We only require the condition $\mathrm{End}(A)\cdot P$ Zariski dense, not necessarily $\mathbb{Z}\cdot P$ Zariski dense.
Let us consider the following situation: let $A_1$ be a simple abelian variety over $K$ and let $A=A_1\times A_1$. Choose $P=([n]P_1, P_1)\in{A(K)}$. If $P_1$ is non-torsion, then $\overline{\mathbb{Z}\cdot P}$ is a strict abelian subvariety (of degree growing with $n$), whereas $\overline{\mathrm{End}(A)\cdot P}=A$. As one has $\widehat{h}_{A,L}(P)=(n^2+1)\widehat{h}_{A_1,L_1}(P_1)$ for the product polarization $L$, and as $\hFplus(A/K)=2\hFplus(A_1/K)$, the point $P$ verifies the expected lower bound if the point $P_1$ does.
\end{rem}

\begin{prop}\label{reg height thm}
Assume the Lang-Silverman Conjecture \ref{LangSilV1}. Let $K$ be a number field and $g,m \geq 1$ be integers. There exists a quantity $c_{24}=c_{24}(K,g,m)>0$ such that for any simple abelian variety $A$ defined over $K$ of dimension $g$, of rank $m$ over $K$, polarized by an ample and symmetric line bundle $L$, 
$$ \mathrm{Reg}_{L}(A/K)\geq \Big(c_{24} \max\{\hFplus(A/K),\,1\}\Big)^{m}.$$
\end{prop}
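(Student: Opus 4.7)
The plan is to combine the Lang--Silverman lower bound on N\'eron--Tate heights of non-torsion points with a lattice-theoretic lower bound (Hermite's inequality) on the covolume of a rank-$m$ lattice in terms of its first successive minimum.

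First, I would exploit simplicity to verify the hypothesis of Conjecture \ref{LangSilV1} for every non-torsion $P\in A(K)$. The Zariski closure $H$ of $\mathbb{Z}\cdot P$ in $A$ is a closed subgroup, and its identity component is an abelian subvariety. By simplicity this component is either $\{0\}$ or $A$; if it were trivial then $H$ would be finite and $P$ would be torsion, a contradiction. Hence $\overline{\mathbb{Z}\cdot P}=A$, which a fortiori gives $\overline{\mathrm{End}(A)\cdot P}=A$. Conjecture \ref{LangSilV1} then yields, for every non-torsion $P\in A(K)$,
$$\widehat{h}_{A,L}(P)\;\geq\; c_{23}(K,g)\,\max\{\hFplus(A/K),\,1\}.$$

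Next, consider the Mordell--Weil lattice $\Lambda:=A(K)/A(K)_{\mathrm{tors}}$ of rank $m$, equipped with the Euclidean structure induced by the N\'eron--Tate bilinear form $\langle\cdot,\cdot\rangle$ attached to $L$. From the definition of $\langle\cdot,\cdot\rangle$ one has $\langle P,P\rangle=\widehat{h}_{A,L}(P)$, so the squared length of any non-zero vector of $\Lambda$ is exactly the N\'eron--Tate height of a non-torsion representative. The preceding step therefore gives a lower bound on the first minimum
$$\lambda_1(\Lambda)^2\;\geq\; c_{23}(K,g)\,\max\{\hFplus(A/K),\,1\}.$$

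To conclude, I appeal to Hermite's inequality: for any lattice $\Lambda$ of rank $m$ in a Euclidean space,
$\lambda_1(\Lambda)^2\leq \gamma_m\,\mathrm{covol}(\Lambda)^{2/m}$, where $\gamma_m$ is Hermite's constant in dimension $m$. By Definition \ref{reg abvar} one has $\mathrm{Reg}_L(A/K)=\mathrm{covol}(\Lambda)^2$, so rearranging gives
$$\mathrm{Reg}_L(A/K)\;\geq\;\gamma_m^{-m}\,\lambda_1(\Lambda)^{2m}\;\geq\;\left(\frac{c_{23}(K,g)}{\gamma_m}\,\max\{\hFplus(A/K),\,1\}\right)^{m},$$
and one may take $c_{24}(K,g,m):=c_{23}(K,g)/\gamma_m$.

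The argument is essentially assembly, and there is no serious obstacle: the key observation is that simplicity is precisely what guarantees that the hypothesis of Conjecture \ref{LangSilV1} is satisfied by \emph{every} non-torsion rational point, and hence by every non-zero vector of the Mordell--Weil lattice. This is what makes a bound on $\lambda_1$ alone (rather than on the higher successive minima, which would otherwise be needed) sufficient to control the full regulator through Hermite's inequality.
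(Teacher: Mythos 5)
Your proof is correct and follows essentially the same route as the paper: simplicity guarantees that every non-torsion point satisfies the density hypothesis of Conjecture \ref{LangSilV1}, so its N\'eron--Tate height is bounded below by $c_{23}\max\{\hFplus(A/K),1\}$, and a geometry-of-numbers inequality converts this into the stated lower bound on $\mathrm{Reg}_L(A/K)$. The only (cosmetic) difference is that you bound the first minimum and invoke Hermite's inequality, whereas the paper bounds all $m$ successive minima by the same quantity and uses Minkowski's second theorem, yielding the constant $c_{23}/m$ in place of your $c_{23}/\gamma_m$.
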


\begin{proof}
Let us denote $h=\max\{\hFplus(A/K),\,1\}$ and for any $i\in\{1, ..., m\}$, the Minkowski $i$th-minimum $\lambda_i=\lambda_i(A(K)/A(K)_{\mathrm{tors}})$. Apply Minkowski's successive minima inequality to the Mordell-Weil lattice, 
\[
 \lambda_1 \cdots \lambda_{m}\leq m^{{m/2}}  (\mathrm{Reg}_{L}(A/K))^{1/2}.
\]

\noindent Now, as $A$ is simple, any non-torsion point verifies $\overline{\mathrm{End}(A)\cdot P}=A$, so using $m$ times the inequality of Conjecture \ref{LangSilV1} one gets
\begin{equation}\label{reg height part}
\mathrm{Reg}_{L}(A/K)\geq \frac{ c_{23}^{m}h^{m}}{m^{{m}}},
\end{equation}
\noindent which gives the result.
\end{proof}

We thus obtain Theorem \ref{reg height thm intro} as a corollary of Proposition \ref{reg height thm}. Indeed if the rank is non zero, as soon as the regulator, the rank and the dimension are bounded from above, the height will be bounded from above, hence the claimed finiteness. This may be expressed in other words by: the Lang-Silverman conjecture implies that the regulator $\Reg_L(A/K)$ verifies a Northcott property on the set of polarized simple abelian varieties (modulo isomorphisms) of dimension $g$ defined over a fixed number field $K$ with $A(K)$ Zariski dense and Mordell-Weil rank bounded from above. 

\begin{rem}
Back to inequality (\ref{reg height part}), in view of Corollary \ref{rank height intro}, we have $h\gg m$. Any improvement of the form $h\gg m^{1+\varepsilon}$ (for a fixed $\varepsilon>0$) would lead to a stronger Northcott property, without assuming that the rank is bounded from above. See also the addendum \cite{Paz16b}.
\end{rem}

\section{A stronger lower bound conjecture}\label{section LS strong}

We would like to refine the conjecture\footnote{Such an attempt has been proposed in Conjecture 1.8 of \cite{Paz3}, but it unfortunately fails because of situations similar to the one described in Remark \ref{nPP} where certain points fall in the first case but should fall in the second instead. This was communicated to the author by the referee of another project, may he be warmly thanked here. We fix the problem by changing the condition given there as $\overline{\mathbb{Z}\cdot P}=A$ by the weaker $\overline{\End(A)\cdot P}=A$. We also add a dependance in $\deg_{L}(A)$ in the attempt to control the degree of $B$ thanks to a remark of Ga\"el R\'emond.} of Lang and Silverman to take care of the exceptional points in Conjecture \ref{LangSilV1}: what can be said about the points $P$ verifying $\overline{\End(A)\cdot P}\subsetneq A$?

\begin{conj}({Lang-Silverman}, new strong version)\label{LangSilV2}
Let $g\geq 1$ be an integer. For any number field $K$, there exists two positive quantities $c_{33}=c_{33}(K,g)$ and $c_{34}=c_{34}(K,g)$ such that for any abelian variety $A/K$ of dimension $g$ and any ample symmetric line bundle $L$ on $A$, for any point $P\in{A(K)}$, one has:
\begin{itemize}
\item[$\bullet$] either there exists an abelian subvariety $B\subset A$, $B\neq A$, of degree $\deg_{L}(B)\leq c_{34}\deg_{L}(A)$ and such that the order of the point $P$ modulo $B$ is bounded from above by $c_{34}$,
\item[$\bullet$] or one has $\mathrm{End}(A)\!\cdot\! P$ is Zariski dense and
\[
\widehat{h}_{A,L}(P) \geq c_{33}\, \max\Big\{\hFplus(A/K),\,1\Big\}\;,
\]
where $\widehat{h}_{A,L}(.)$ is the { N\'eron-Tate} height associated to the line bundle $L$ and $\hFplus(A/K)$ is the (relative) Faltings height of the abelian variety $A/K$.
\end{itemize}
\end{conj}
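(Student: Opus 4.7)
The plan is to reduce Conjecture \ref{LangSilV2} to the (itself conjectural) Conjecture \ref{LangSilV1} via an analysis of the $\End(A)$-orbit of $P$. Given $P\in A(K)$, let $X=\overline{\End(A)\cdot P}$. Since endomorphisms are additive, $\End(A)\cdot P$ is a subgroup of $A$, so $X$ is an algebraic subgroup of $A$, hence of the form $X = B + F$ where $B$ is its identity component (an abelian subvariety) and $F$ is a finite set of coset representatives for the component group $X/B$. If $B = A$ then $\End(A)\cdot P$ is Zariski dense and the second alternative of the conjecture holds: the required height lower bound is then exactly Conjecture \ref{LangSilV1} applied to $P$, with $c_{33}$ taken to be $c_{23}$.

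If instead $B\subsetneq A$, I would aim to verify the first alternative with this $B$ playing the role of the subvariety. The order of $P$ modulo $B$ is the order of $\bar P$ in the finite group $X/B$, so it is bounded by $|F|$. Two quantitative ingredients are then required, both uniform in $K$ and $g$: a degree bound $\deg_L(B)\leq c_{34}\deg_L(A)$ and a bound $|F|\leq c_{34}$. For the degree bound one can invoke Poincar\'e complete reducibility to produce an isogeny complement $C$ with $B\times C\sim A$, and quantitative versions of this decomposition (in the spirit of \cite{GaRe2}) should control $\deg_L(B)$ in terms of $\deg_L(A)$ and $g$ alone.

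The main obstacle is the uniform bound on the size of the component group $|F|=|X/B|$. A promising strategy is to proceed isotypically: decompose $A\sim A_1^{n_1}\times\cdots\times A_r^{n_r}$ up to isogeny with the $A_i$ simple and pairwise non-isogenous, write $P=(P_1,\ldots,P_r)$ in this decomposition, and analyse each isotypical block $A_i^{n_i}$ separately. Within each block, the $\End(A_i^{n_i})$-module structure is governed by the semisimple algebra $\End(A_i)\otimes\mathbb{Q}$ together with its Rosati involution, and this should yield, for every proper $\End$-stable closed subgroup, a decomposition into an abelian subvariety with a finite quotient of bounded order. The delicate point is the recombination of the blocks: a priori, interactions between distinct factors can produce components of $F$ of unbounded size, so the core of the argument would consist in showing that any such large component can be absorbed into an enlarged abelian subvariety $B'\supset B$ of still-controlled degree, at which point the remaining $|F|$ would depend only on $g$ and $K$, as required.
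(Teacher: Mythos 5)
The statement you are trying to prove is Conjecture \ref{LangSilV2}: the paper does not prove it (and cannot be expected to), it only formulates it as a strengthening of Conjecture \ref{LangSilV1}, motivated by the lower bounds of David and of \cite{Paz13} for small dimension. So at best your argument could be a reduction of Conjecture \ref{LangSilV2} to Conjecture \ref{LangSilV1}; but such a reduction is impossible in principle, and the obstruction sits exactly where you locate your ``main obstacle''. Take $A$ simple and $P\in A(K)_{\mathrm{tors}}$ of order $n$: then $X=\overline{\End(A)\cdot P}$ is finite, its identity component is $B=0$, and the order of $P$ modulo $B$ is $n$. Your proposed fix --- absorbing a large component group into an enlarged abelian subvariety $B'$ of controlled degree --- has nothing to absorb into, since for $A$ simple the only proper $\End(A)$-stable abelian subvariety is $0$ and the conjecture forbids $B'=A$. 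Hence the bound $c_{34}(K,g)$ on the order of $P$ modulo $B$ that your first alternative requires is precisely the strong uniform torsion conjecture, which the paper itself points out is a consequence of Conjecture \ref{LangSilV2}; Conjecture \ref{LangSilV1} says nothing about such points, because a point with finite $\End(A)$-orbit never has dense orbit, so no appeal to \ref{LangSilV1}, and no isotypical bookkeeping, can supply it. The same issue reappears in non-simple examples with $P$ a sum of a torsion point of large order and a point generating a proper block, so it is not an artefact of the torsion case.

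A secondary, smaller point: even your structural reduction of the problem to bounding $\deg_L(B)$ and $\vert X/B\vert$ would need care, but it is the more believable half. The identity component $B$ of $\overline{\End(A)\cdot P}$ is $\End(A)$-stable, hence (up to isogeny) a sum of isotypic blocks, and controlling the degree of such subvarieties in terms of $g$, $K$ and $\deg_L(A)$ is plausibly within reach of the quantitative isogeny estimates of Gaudron--R\'emond. But this does not rescue the argument: the uniform bound on the component group is the whole content of the first bullet, and it is an open conjecture strictly stronger than anything Conjecture \ref{LangSilV1} provides. The honest conclusion is that \ref{LangSilV2} is a genuinely stronger conjecture, to be assumed (as the paper does in Section \ref{section LS strong}), not derived.
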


This is a strong statement. It implies the strong torsion conjecture for example. Indeed, any torsion point $P\in{A(K)_{\mathrm{tors}}}$ falls into the first case because its canonical height is zero. Hence the order of $P$ is bounded from above solely in terms of $K$ and $g$ and of the cardinality of the torsion subgroup of a strict abelian subvariety $B$. An easy induction on the dimension of $A$ gives a bound on the order of $P$ solely in terms of $K$ and $g$, hence on the exponent of the torsion group as well, hence on the cardinal of the torsion group $A(K)_{\mathrm{tors}}$ as well. 

This strong form of the conjecture is motivated by Th\'eor\`eme 1.4 page 511 of \cite{Sinn} and Th\'eor\`emes 1.8 and 1.13 of \cite{Paz13}. Remark that in both of these works, the abelian varieties considered are principally polarized, hence the dependance in the degree of $A$ is only through the dimension $g$.

Let us see now how this statement can help in understanding the link between the Mordell-Weil group $A(K)$ and the abelian subvarieties of $A$. The following quantity will play a key role in this paragraph. 

\begin{defin}
Let $A$ be an abelian variety over a number field $K$. Let $m_K$ denote the Mordell-Weil rank of $A(K)$.
Define $$m_0=\sup\{\mathrm{rank}(B(K))\,\vert  \, B \,\mathrm{strict \,abelian \,subvariety \,of} \,A\}.$$
We will call the relative quantity $m_K-m_0$ the \emph{Zariski rank} of the Mordell-Weil group $A(K)$. 
\end{defin}

Note that $m_K-m_0>0$ is equivalent to $A(K)$ being Zariski dense in $A$. This Zariski rank could be compared with the following quantity for a number field $K$. If $r_K$ is the rank of units in $K$, let $r_0$ denote the maximal rank of units in a strict subfield of $K$. As already noticed in \cite{Paz14} in the easier case of elliptic curves, the Zariski rank $m_K-m_0$ plays the same role (at least when one gives lower bounds on the regulator in both contexts) as the relative rank of units $r_K-r_0$ for number fields.

The next lemma studies the size of the successive minima of the Mordell-Weil lattice modulo torsion, where the square of the norm is implicitly given by the N\'eron-Tate height. We believe this version could lead in the future to some improvements in Theorem \ref{reg height thm intro}.

\begin{lem}\label{success}
Assume Conjecture \ref{LangSilV2}. Let $(A,L)$ be a polarized abelian variety of dimension $g$ defined over a number field $K$. For any $i\in\{1, ..., m_K\},$ let $\lambda_i$ be the $i$-th successive minima of the lattice $A(K)/A(K)_{tors}$. Then there is a quantity $c_{35}=c_{35}(K,g,\deg_{L}(A))>0$ such that 

\begin{center}
$\left\{\begin{tabular}{ll}
for any $i$, & $\lambda_i^2 \geq c_{35}\, i $,\\
\\
if $i> m_0$, & $\lambda_i^2 \geq c_{35} \max\{1, \hFplus(A/K)\}$. 
\end{tabular}\right.$
\end{center}
\end{lem}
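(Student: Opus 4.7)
\emph{Base case.} The plan is to induct on $g=\dim A$, combining both assertions in the inductive step. When $g=1$, $A$ is simple and $m_0=0$; the strict-subvariety alternative of Conjecture \ref{LangSilV2} is vacuous for non-torsion points, so every non-torsion $P\in A(K)$ falls in the second alternative, giving $\widehat{h}_{A,L}(P)\geq c_{33}\max\{1,\hFplus(A/K)\}$. Part 2 is then immediate. Part 1 follows by combining this with Corollary \ref{rank height intro}, which controls $m_K$ linearly in $\max\{1,\hFplus(A/K),\log|\Delta_K|\}$; since $c_{35}$ may depend on $K$, the $\log|\Delta_K|$ term is absorbed.

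\emph{Inductive setup.} For $g\geq 2$, choose representatives $P_1,\ldots,P_i$ achieving the successive minima and apply Conjecture \ref{LangSilV2} to each. Partition $\{1,\ldots,i\}=J_A\sqcup J_B$, with $J_B$ consisting of indices for which there exist $B^{(k_j)}\subsetneq A$ of degree $\leq c_{34}\deg_L(A)$ and $n_j\leq c_{34}$ with $n_j P_j\in B^{(k_j)}(K)$. The key finiteness input is that Hilbert-scheme boundedness yields a fixed list $B^{(1)},\ldots,B^{(N_0)}$ with $N_0=N_0(K,g,\deg_L(A))$. For each pair $(k,n)$, the preimage $\Lambda_{k,n}=[n]^{-1}(B^{(k)}(K))$ is a subgroup of $A(K)$ of $\rank$ at most $\rank(B^{(k)}(K))\leq m_0$, since $[n]$ is injective modulo torsion. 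Grouping $J_B$ into $I_{k,n}=\{j\in J_B:(k_j,n_j)=(k,n)\}$, the points $\{nP_j\}_{j\in I_{k,n}}$ give $|I_{k,n}|$ linearly independent elements of $B^{(k)}(K)$ of $L$-height at most $c_{34}^2\lambda_i^2$; the inductive Part 1 applied to $B^{(k)}$ (of dimension $<g$) yields $|I_{k,n}|\leq C\lambda_i^2$, and summing gives $|J_B|\leq C'\lambda_i^2$ with $C'=C'(K,g,\deg_L(A))$.

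\emph{Part 1.} If $|J_A|\geq 1$, pick $j^*\in J_A$; then $\lambda_i^2\geq\lambda_{j^*}^2\geq c_{33}\max\{1,\hFplus(A/K)\}$, and Corollary \ref{rank height intro} (absorbing $\log|\Delta_K|$) yields $i\leq m_K\leq c_4'\max\{1,\hFplus(A/K)\}$, so $\lambda_i^2\geq (c_{33}/c_4')\,i$. If $|J_A|=0$, then $i=|J_B|\leq C'\lambda_i^2$, so $\lambda_i^2\geq i/C'$. In either case Part 1 holds with $c_{35}=\min\{c_{33}/c_4',1/C'\}$.

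\emph{Part 2 and main obstacle.} When $i>m_0$, if $|J_A|\geq 1$ then $\lambda_i^2\geq c_{33}\max\{1,\hFplus(A/K)\}$ follows at once. The delicate case is $|J_A|=0$: the rank-$i$ subgroup $\langle P_1,\ldots,P_i\rangle$ lies in $(\sum_k B^{(k)})(K)$, and the inequality $i>m_0$ forces $\sum_k B^{(k)}=A$ as abelian subvarieties, since otherwise the Zariski closure would violate the definition of $m_0$. Poincar\'e reducibility applied to the surjection $\prod_k B^{(k)}\twoheadrightarrow A$, combined with Faltings' invariance of the stable height under isogeny, gives $\hFplus(A/\bar{K})\leq\sum_k\hFplus(B^{(k)}/\bar{K})+O(1)$, so some $B^{(k^*)}$ satisfies $\hFplus(B^{(k^*)}/\bar{K})\geq \hFplus(A/\bar{K})/N_0-O(1)$. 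Invoking the inductive Part 2 for $B^{(k^*)}$ at an index $n^*$ with $|I_{k^*,n^*}|>m_0^{(B^{(k^*)})}$ then yields $\lambda_i^2\geq c''\max\{1,\hFplus(B^{(k^*)}/K)\}\geq c'''\max\{1,\hFplus(A/K)\}$. The \textbf{main obstacle} is to arrange the choice of $(k^*,n^*)$ so that both $\hFplus(B^{(k^*)})$ is comparable to $\hFplus(A)/N_0$ and the occupancy $|I_{k^*,n^*}|$ exceeds $m_0^{(B^{(k^*)})}$; this requires a careful joint pigeonhole on the distribution $(|I_{k,n}|)_{k,n}$ and the Faltings-height spread $(\hFplus(B^{(k)}))_k$, with all intermediate constants depending only on $K$, $g$, and $\deg_L(A)$.
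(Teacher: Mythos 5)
Your Part~1 argument (counting minima representatives that fall into the subvariety alternative via the finitely many subvarieties of bounded degree, plus the rank bound of Corollary \ref{rank height intro}) is reasonable, but the proof of Part~2 has a genuine gap, and you have in fact flagged it yourself. In the case $i>m_0$ with $J_A=\emptyset$ you need some $B^{(k^*)}$ that simultaneously (a) carries a definite fraction of $\hFplus(A/K)$ and (b) receives more than $m_0^{(B^{(k^*)})}$ independent small points $n_jP_j$. Nothing ties these two properties together: the distribution of the points among the $B^{(k)}$ and the distribution of the Faltings height among the $B^{(k)}$ are a priori unrelated, and the subvariety singled out by your pigeonhole on heights may receive no points at all (e.g.\ a heavy factor of Mordell--Weil rank $0$). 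A workable version would need extra structural input (complements, additivity of ranks under isogeny, etc.), none of which is supplied, so the "careful joint pigeonhole" is not a proof but the open problem itself. A secondary defect in the same step: the stable Faltings height is \emph{not} invariant under isogeny, only bounded up to a term in the logarithm of the isogeny degree; to get $\hFplus(A/\overline{\mathbb{Q}})\leq\sum_k\hFplus(B^{(k)}/\overline{\mathbb{Q}})+O(1)$ from the surjection $\prod_k B^{(k)}\to A$ you must produce a complement/isogeny of controlled degree (Gaudron--R\'emond type bounds), which you do not justify.

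The paper avoids this obstacle entirely by a different mechanism, and it is worth noting the contrast. Instead of applying Conjecture \ref{LangSilV2} to the successive-minima representatives themselves, it fixes the finite family $\mathcal{B}$ of abelian subvarieties of degree at most $\overline{c_{34}}^{\,g}\deg_L(A)$ and, for each $i$, chooses a non-torsion point $P_i$ of minimal height \emph{outside} the union $Z_i$ of the groups $B(K)$ with $\rank(B(K))<i$; the avoidance lemma of Gaudron--R\'emond (Th\'eor\`eme 1.1 of \cite{GaRe}) then gives $\widehat{h}_{A,L}(P_i)\leq M\lambda_i^2$ with $M$ depending only on $K$, $g$, $\deg_L(A)$. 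Applying the conjecture to this single point, the subvariety alternative forces $\rank(B(K))\geq i$, hence is impossible when $i>m_0$, so the dense alternative yields Part~2 immediately; Part~1 is obtained by descending through a chain $B\supset B_1\supset\cdots$ inside $\mathcal{B}$ together with Corollary \ref{rank height intro}. In short, the comparison between $\lambda_i^2$ and the height of a well-chosen avoiding point is the key idea your proposal is missing, and without it (or a genuinely new argument replacing it) the second inequality of the lemma is not established.
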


\begin{proof}
Within the proof, we will use the symbol $c_{*}$ for a positive quantity only depending on $g$, on $K$ and on $\deg_{L}(A)$. We allow the value of this quantity $c_{*}$ to vary at some steps within the proof, as long as it depends only on $g$, on $K$ and on $\deg_{L}(A)$ and stays positive. If $c_{34}(K,g)$ denotes the quantity appearing in Conjecture \ref{LangSilV2}, denote by $\displaystyle{\overline{c_{34}}=\max\{1,\max_{1\leq i \leq g} c_{34}(K,i)}\}$, the field $K$ being fixed. 

Let $\mathcal{B}$ denote the set of all abelian subvarieties $B$ in $A$ of degree bounded from above by $\overline{c_{34}}^g\deg_L(A)$: it contains the subvarieties appearing in the first case of Conjecture \ref{LangSilV2}, and we raise $\overline{c_{34}}$ to the power $g$ to be able to use an induction on the dimension $g$ towards the end. This is a finite set with cardinal bounded from above by a quantity depending only on $g$, on $K$ (because $\overline{c_{34}}$ only depends on $g$ and $K$) and on $\deg_{L}(A)$. The reader interested in an explicit upper bound for the cardinal of this set can refer to Proposition 4.1 page 529 of \cite{Remond3}.

Choose an integer $i\in\{1, ..., m_K\}$ and define $$Z_i=\bigcup_{\substack{B\in{\mathcal{B}}\\ \mathrm{rank}(B(K))<i}} B(K).$$
The set $A(K)\backslash Z_i$ is non-empty, because the rank of the lattices in the finite union is always strictly smaller than $m_K$. Let $P_i$ be a non-torsion point of minimal height lying in $A(K)\backslash Z_i$. Apply the ``Lemme d'\'evitement'' of Gaudron-R\'emond given in Th\'eor\`eme 1.1 page 125 of \cite{GaRe} to obtain $\widehat{h}_{A,L}(P_i)\leq M \lambda_i(A(K))^2$, where $M$ is positive and bounds from above the cardinality of $\mathcal{B}$, which can be chosen depending only on $g$, on $K$ and on $\deg_L(A)$ in view of Conjecture \ref{LangSilV2}.

On the one hand if $\End(A)\cdot{P_i}$ is dense in $A$, by Conjecture \ref{LangSilV2} one has $\widehat{h}_{A,L}(P_i)\geq c_{*} \max\{1,\hFplus(A/K)\}$. We add that $\max\{1,\hFplus(A/K)\}\geq c_{*} m_K$ by applying the Corollary $\ref{rank height intro}$, and $m_K\geq i$ by definition. 

On the other hand if $\End(A)\cdot{P_i}$ is not dense in $A$, by Conjecture \ref{LangSilV2} there exists an abelian variety $B\in{\mathcal{B}}$ such that the order of $P_i$ is less than $c_{34}$ modulo $B$, then by definition of $m_0$ one has $\mathrm{rank}(B(K))\leq m_0$ and by choice of $P_i$ one has $\mathrm{rank}(B(K))\geq i$, hence $m_0\geq i$. 

Apply Conjecture \ref{LangSilV2} to $P_i\in{B(K)}$ (modulo torsion in $A$). If $\End(B)\cdot P_i$ is dense in $B$ one obtains $$\widehat{h}_{A,L}(P_i)=\widehat{h}_{B,L}(P_i)\geq c_{*}\max\{1, \hFplus(B/K)\},$$ then again using Corollary $\ref{rank height intro}$ one gets $\widehat{h}_{A,L}(P_i)\geq c_{*}\mathrm{rank}(B(K))\geq c_{*} i$. If $\End(B)\cdot P_i$ is not dense in $B$, we are reduced to the case of a strict abelian subvariety of $A$. There exists an abelian subvariety $B_1\subset B$ of degree bounded from above by $c_{34}\deg_L(B)$ such that $P_i$ has order bounded from above by $\overline{c_{34}}$ modulo $B_1$. As $\deg_L(B)\leq \overline{c_{34}}\deg_L(A)$, one has $\deg_L(B_1)\leq \overline{c_{34}}^2 \deg_L(A)$, hence $\deg_L(B_1)\leq \overline{c_{34}}^g \deg_L(A)$ so $B_1\in{\mathcal{B}}$. As $P_i$ avoids $Z_i$ one has again $\mathrm{rank}(B_1(K))\geq i$. If $\End(B_1)\cdot P_i$ is dense in $B_1$, then $$\widehat{h}_{A,L}(P_i)\geq c_{*}\max\{1, \hFplus(B_1/K)\}\geq c_{*} \mathrm{rank}(B_1(K))\geq c_{*} i.$$ If $\End(B_1)\cdot P_i$ is not dense in $B_1$, one continues by induction until one reaches a strict abelian subvariety $B_n$ such than $\End(B_n)\cdot P_i$ is dense in $B_n$, which will eventually be the case when $B_n$ is simple for instance. It gives the lemma.
\end{proof}

\begin{prop}\label{reg height thm V2}
Assume Conjecture \ref{LangSilV2}. Let $K$ be a number field, let $g\geq 1$ be an integer, let $m\geq 0$ be an integer. There exists a quantity $c_{36}=c_{36}(K,g,m)>0$ such that for any principally polarized abelian variety $A$ defined over $K$ of dimension $g$, equipped with an ample and symmetric line bundle $L$, with $A(K)$ of rank $m$,
$$ \mathrm{Reg}_{L}(A/K)\geq \Big(c_{36} \max\{\hFplus(A/K),\,1\}\Big)^{m-m_0}.$$
\end{prop}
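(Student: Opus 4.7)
The plan is to parallel the proof of Proposition \ref{reg height thm}, but to split the use of Minkowski's successive minima theorem at the index $m_0$, using the two-part bound supplied by Lemma \ref{success}.

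First, I would apply Minkowski's second theorem to the Mordell-Weil lattice $A(K)/A(K)_{\mathrm{tors}}$ endowed with the quadratic form $\widehat{h}_{A,L}$. Denoting by $\lambda_1\leq\cdots\leq\lambda_m$ the successive minima, this yields an inequality of the shape
$$\prod_{i=1}^{m}\lambda_i \;\leq\; c'_m\, (\mathrm{Reg}_L(A/K))^{1/2},$$
with a constant $c'_m$ depending only on $m$, equivalently
$$\mathrm{Reg}_L(A/K)\;\geq\; c''_m\prod_{i=1}^{m}\lambda_i^{2}.$$

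Next, I would split the product at the index $m_0$ and insert the two estimates of Lemma \ref{success}:
$$\prod_{i=1}^{m}\lambda_i^{2} \;=\; \Bigl(\prod_{i=1}^{m_0}\lambda_i^{2}\Bigr)\cdot\Bigl(\prod_{i=m_0+1}^{m}\lambda_i^{2}\Bigr).$$
For $i\leq m_0$ the weaker bound $\lambda_i^{2}\geq c_{35}\,i$ gives
$$\prod_{i=1}^{m_0}\lambda_i^{2}\;\geq\; c_{35}^{\,m_0}\,m_0!,$$
a positive constant depending only on $K$, $g$ and $m$; here is where the principal polarization hypothesis enters in an essential way, since $\deg_L(A)=g!$ depends only on $g$ and therefore $c_{35}$ from Lemma \ref{success} depends only on $K$ and $g$ (and not on the polarization degree). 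For $i>m_0$ the sharper bound $\lambda_i^{2}\geq c_{35}\max\{1,\hFplus(A/K)\}$ yields
$$\prod_{i=m_0+1}^{m}\lambda_i^{2}\;\geq\; \bigl(c_{35}\max\{1,\hFplus(A/K)\}\bigr)^{m-m_0}.$$

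Combining these ingredients produces a positive constant $c(K,g,m)>0$ satisfying
$$\mathrm{Reg}_L(A/K)\;\geq\; c(K,g,m)\cdot\bigl(c_{35}\max\{1,\hFplus(A/K)\}\bigr)^{m-m_0}.$$
When $m>m_0$, I would absorb $c(K,g,m)^{1/(m-m_0)}$ into the inner constant to obtain a $c_{36}=c_{36}(K,g,m)>0$ of the announced form. The edge case $m=m_0$ reduces to $\mathrm{Reg}_L(A/K)\geq 1$, which holds by the empty-determinant convention in the only relevant subcase $m=m_0=0$; in the applications to Theorem \ref{reg height thm intro} the Zariski density of $A(K)$ forces $m>m_0$. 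I do not expect a substantive obstacle here: the real content is in Lemma \ref{success} (which encodes Conjecture \ref{LangSilV2} together with the avoidance lemma of Gaudron--R\'emond and the rank bound of Corollary \ref{rank height intro}); the step that remains is essentially a bookkeeping of the constants through Minkowski's inequality.
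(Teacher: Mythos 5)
Your proof is correct and takes essentially the same route as the paper's: Minkowski's second theorem for the Mordell--Weil lattice combined with the two bounds of Lemma \ref{success} applied with $\deg_L(A)=g!$, splitting the product of successive minima at the index $m_0$ and absorbing the constants when $m>m_0$. (A small caveat: $m=m_0>0$ can occur, e.g. for a product with a rank-zero factor, so it is not accurate that the only relevant subcase of $m=m_0$ is $m=m_0=0$; but this degenerate case is treated no more carefully in the paper's own argument and plays no role in Theorem \ref{reg height thm intro}, where Zariski density forces $m>m_0$.)
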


\begin{proof}
Let us denote $h=\max\{\hFplus(A/K),\,1\}$ and $m=\mathrm{rank}(A(K))$, and for any $i\in\{1, ..., m\}$, $\lambda_i=\lambda_i(A(K)/A(K)_{\mathrm{tors}})$. 

The inequality is trivial for $m=0$. From now on, let us assume $m\neq 0$. Apply Minkowski's successive minima inequality to the Mordell-Weil lattice, 
\[
 \lambda_1^2 \cdots \lambda_{m}^2 \leq {m}^{m}  \mathrm{Reg}_{L}(A/K).
\]

\noindent Now apply lemma \ref{success} with $\deg_L(A)=g!$ to get
\begin{equation}\label{reg height part2}
\mathrm{Reg}_{L}(A/K)\geq \frac{ c_{35}^{m-m_0}h^{m-m_0}\lambda_1^2 \cdots \lambda_{m_0}^2}{m^{m}},
\end{equation}

%
%
%
%
If $m_0=0$, the inequality is the one claimed. Let us suppose that $m_0\neq 0$. Apply again Lemma \ref{success} to get
\begin{equation}\label{reg m02}
 \mathrm{Reg}_{L}(A/K)\geq m^{-m} (c_{35})^{m_0}\,(m_0!) \left(c_{35} h\right)^{m-m_0}.
\end{equation}
%
%
%
Hence the claimed inequality, as $1\leq m_0\leq m$. Finally, if the regulator is bounded from above then the height is bounded from above as soon as $m\neq m_0$, hence the claimed finiteness, as $m>m_0$ is equivalent with the fact that $A(K)$ is Zariski dense in $A$.
\end{proof}

Theorem \ref{reg height thm intro} follows directly from Proposition \ref{reg height thm V2}, because the set of principally polarized abelian varieties defined over a fixed number field $K$, of fixed dimension $g$ such that $A(K)$ is Zariski dense in $A$ and with regulator and rank bounded from above is also a set of bounded height under Conjecture \ref{LangSilV2}. Note that in view of (\ref{regulators}), one can replace $\Reg_L(A/K)$ by $\Reg(A/K)$ in Theorem \ref{reg height thm intro} because the polarization is principal.

\section{Conclusion}\label{section conclusion}

We generalize here the last section of \cite{Paz14} to abelian varieties, extending the dictionary given in \cite{Hin} as well.
\\

\begin{center}
$\begin{array}{lcccl}
& \mathrm{Number\, field\,} K &  & \mathrm{Abelian\, variety\,} A/K &\\
\\
\mathrm{zeta\, function} & \zeta_K(s) & \leftrightarrow & L(A,s) & L\, \mathrm{function}\\
\mathrm{log \,of \,discriminant}& \log\vert D_K\vert & \leftrightarrow & \hFplus(A) & \mathrm{Faltings \,height}\\
\mathrm{regulator} & R_K & \leftrightarrow & \Reg(A/K) &\mathrm{regulator}\\
\mathrm{class\,number}& h_K &  \leftrightarrow &\vert \Sha(A/K)\vert & \mathrm{Tate}-\mathrm{Shafarevitch \,group}\\
\mathrm{torsion} & (U_K)_{\mathrm{tors}} & \leftrightarrow & (A\times \check{A})(K)_{\mathrm{tors}}& \mathrm{torsion \,of\,} A\, \mathrm{and\, dual\,} \check{A} \\

\mathrm{degree} & d & \leftrightarrow  & g & \mathrm{dimension}\\

\mathrm{max \,subrank\, of\, units}& r_0 & \leftrightarrow & m_0 & \mathrm{max\, rank\, of\, ab.\, subvar. }\\
 \mathrm{relative\, unit\, ranks}& r_K-r_0 & \leftrightarrow & m_K-m_0 & \mathrm{Zariski} \rank \mathrm{of} A(K)\\
 \mathrm{CM \,field} & r_K=r_0 & \leftrightarrow & m_K=m_0& A(K)\;\mathrm{ non\, Z. \,dense}\\
  \mathrm{non-CM \,field} & r_K>r_0 & \leftrightarrow & m_K>m_0& A(K)\;\mathrm{ Zariski \,dense}\\
\\
\end{array}$
\end{center}

\begin{rem}
One could prefer to put in link the property ``$A(K)$ Zariski dense in $A$'' with ``$K$ generated by units''. Let us remark that $A(K)$ Zariski dense is equivalent to $m_K>m_0$, but on the number field side there exists some CM fields $K$ that are generated by units, so $K$ generated by units is not equivalent to $r_K>r_0$. However, regarding the finiteness property obtained from giving an upper bound for the regulator, one may replace the property of being non-CM by the property of being generated by units because there are only finitely many CM fields generated by units with regulator bounded from above. This property of generation, rather than being non-CM, could be seen as a better match to the density property of $A(K)$ on the abelian side.
\end{rem}

\end{document}